\documentclass[]{interact}

\usepackage{amsmath,amssymb,amsthm,mathtools,graphicx,microtype}
\usepackage{cite}

\theoremstyle{plain}
\newtheorem{theorem}{Theorem}[section]
\newtheorem{lemma}[theorem]{Lemma}

\theoremstyle{definition}
\newtheorem{definition}[theorem]{Definition}

\theoremstyle{remark}
\newtheorem{remark}[theorem]{Remark}

\newtheorem{assumption}[theorem]{Assumption}

\usepackage{bm,pgfplots}
\pgfplotsset{compat=1.17}
\usepackage{array,hyperref,enumitem}
\usepackage{orcidlink}

\newcolumntype{M}[1]{>{\centering\arraybackslash}m{#1}}

\newcommand{\dx}{\,\mathrm{d}x}
\newcommand{\dt}{\,\mathrm{d}t}
\newcommand{\ddt}{\frac{\mathrm{d}}{\mathrm{d}t}}
\newcommand{\ds}{\,\mathrm{d}s}
\newcommand{\R}{\mathbb{R}}
\newcommand{\Div}{\operatorname{div}}

\definecolor{myblue}{RGB}{31,119,180}
\definecolor{myorange}{RGB}{255,127,14}
\definecolor{mygreen}{RGB}{44,160,44}

\renewcommand{\rho}{\varrho}

\definecolor{color0}{HTML}{4E79A7}
\definecolor{color1}{HTML}{F28E2B}
\definecolor{color2}{HTML}{E15759}
\definecolor{color3}{HTML}{76B7B2}
\definecolor{color4}{HTML}{59A14F}
\definecolor{color5}{HTML}{EDC948}
\definecolor{color6}{HTML}{B07AA1}
\definecolor{color7}{HTML}{FF9DA7}
\definecolor{color8}{HTML}{9C755F}
\definecolor{color9}{HTML}{BAB0AC}

\begin{document}

\articletype{\vspace{-1.5cm}}

\title{Global weak solutions of a one-sided degenerate Cahn--Hilliard model for traction-driven digit morphogenesis}

\author{
\name{Marvin Fritz\orcidlink{0000-0002-8360-7371}}
\email{marvin.fritz@univie.ac.at}
\affil{Faculty of Mathematics, University of Vienna, Vienna, Austria}
}

\maketitle

\begin{abstract}
We study a one-sided degenerate Cahn--Hilliard equation with anisotropic traction flux, arising as a reduced continuum description of mechanically biased cell interactions in digit-forming organoids. The equation combines a one-sided degenerate mobility with a density-weighted anisotropic higher-order transport term. This traction term is not generated by the variational derivative of the Cahn--Hilliard energy and therefore produces sign-indefinite contributions in the energy balance. For nonnegative initial data, we prove the global-in-time existence of nonnegative weak solutions. The proof combines an energy estimate for the diffusive flux with a mobility-matched entropy method adapted to the vacuum degeneracy. A key point is that the entropy variable cancels the mobility, turning the anisotropic traction contribution into a coercive first-order term in the entropy inequality, while the energy estimate supplies a weighted control of the diffusive flux.
\end{abstract}

\begin{keywords}
One-sided degenerate Cahn--Hilliard equation; anisotropic traction flux; entropy method; weak solutions; organoids; digit morphogenesis; mechano-chemical patterning
\end{keywords}
\section{Introduction}

We study the degenerate Cahn--Hilliard-type equation
\begin{align*}
\partial_t\rho
&=\Div(\rho\nabla q)+\varepsilon^{-1}\eta^2 \Div(\rho D(\nu)\nabla\rho),\\
q
&=\varepsilon^{-1}W'(\rho)-\varepsilon\Delta\rho,
\end{align*}
on bounded domains with no-flux boundary conditions, where \(D(\nu)\) is a prescribed bounded symmetric nonnegative tensor field. The model was derived in \cite{Tsutsumi2025} as a reduced continuum description of mechanically biased cell interactions in digit-forming organoids. Further, a structure-preserving scheme for the regularized model was analyzed in \cite{fritz2026structure}. In the present paper, our main goal is the existence analysis of this fourth-order equation with degenerate mobility and anisotropic transport. For general overviews of the Cahn--Hilliard equation and its variants, we refer to the monograph \cite{miranville2019cahn} and the surveys \cite{wu2022review,fritz2023tumor,NovickCohen2008}.

The present problem differs from the standard settings in two essential ways. First, the mobility \(m(\rho)=\rho\) degenerates only at the vacuum state, so the natural entropy structure is one-sided and tied to nonnegativity. Second, the additional traction term is not generated by the variational derivative of the Cahn--Hilliard energy. Thus the equation is not a gradient flow of \eqref{eq:E} with respect to the mobility-induced metric; rather, the traction flux acts as a prescribed conservative transport perturbation of the degenerate Cahn--Hilliard dynamics. The main analytical task is therefore to recover compactness and identify the diffusive flux despite the degeneracy of the mobility and the sign-indefinite traction contribution in the energy balance.

Our main result proves the global-in-time existence of nonnegative weak solutions for nonnegative initial data on bounded domains with no-flux boundary conditions. The proof combines energy estimates with a mobility-matched entropy method adapted to the one-sided degeneracy and to the anisotropic traction contribution. More precisely, the entropy inequality controls the fourth-order term and provides the compactness needed to pass to the limit, while the energy estimate yields a weighted bound on the diffusive flux. To the best of our knowledge, this is the first global existence result for this reduced organoid equation and, more generally, for this class of degenerate Cahn--Hilliard equations with prescribed anisotropic traction tensors.

The equation is also related to perturbed Cahn--Hilliard systems in which phase separation interacts with transport, anisotropy, or forcing. Diffuse-interface descriptions of fingering and interfacial instabilities arise, for example, in Hele--Shaw and porous-media settings \cite{cueto_juanes_2014,wise_2010_chhs,wang_zhang_2013,AndersonMcFaddenWheeler1998,LowengrubTruskinovsky1998}, while advective Cahn--Hilliard models illustrate how transport can compete with coarsening and enhance filamentary structures \cite{naraigh_thiffeault_2007,feng_feng_iyer_thiffeault_2019,fritz2019unsteady}. In the present case, however, the perturbation is a density-weighted anisotropic higher-order flux, and this specific structure is what drives both the analysis and the observed protrusive dynamics.

The biological motivation comes from recent work on digit-forming organoids and related mechanochemical models of tissue self-organization. Digit patterning has long been connected to reaction--diffusion and mechanochemical mechanisms, beginning with Turing's seminal ideas \cite{Turing1952} and subsequent developments in developmental biology and morphogenesis \cite{KondoMiura2010,Sheth2012,Raspopovic2014,OsterMurrayHarris1983,MurraySwanson2003,MurrayOster1984}. More recently, organoid systems have provided experimentally accessible settings in which growth, signalling, and mechanics can be perturbed and quantified in vitro; see, for example, \cite{LancasterKnoblich2014,Clevers2016,sato2009single,Okuda2018,PerezGonzalez2021,Yang2021,Gjorevski2022,Nikolaev2020}. Motivated by the experiments in \cite{Tsutsumi2025}, an agent-based model was proposed in which mechanically biased cell--cell traction generates protrusive digit-like morphologies. The reduced one-field equation studied here corresponds to a regime in which the underlying cell populations are effectively segregated and the macroscopic orientation field \(\nu\) is treated as prescribed.

The rest of the paper is organized as follows. In Section~\ref{sec:model} we introduce the reduced continuum model and state the standing assumptions. Section~\ref{sec:analysis} contains the definition of weak solutions and the proof of global existence based on regularization, energy estimates, entropy inequalities, and compactness arguments.
We conclude in Section~\ref{sec:conclusion} with a discussion of open problems and possible extensions.

\section{Mathematical modeling}\label{sec:model}

Let \(\Omega\subset\mathbb{R}^d\) (\(d\in\{2,3\}\)) be a bounded domain with outer unit normal \(n\).
We study a reduced continuum model for a distal cell-density variable
\(\rho=\rho(x,t)\ge 0\) for \((x,t)\in \Omega\times(0,T)\),
where \(\rho\) is interpreted as a local number density of motile cells.
The dynamics is driven by a diffuse-interface free energy of Ginzburg--Landau type:
\begin{equation}\label{eq:E}
E(\rho):=\int_\Omega\Big(\varepsilon^{-1}W(\rho)+\frac{\varepsilon}{2}|\nabla\rho|^2\Big)\dx,
\end{equation}
where \(\varepsilon>0\) is the diffuse-interface length scale and \(W\) is a
potential favoring two phases such as low and high density.
The associated chemical potential $q$ is given as the variational derivative of \(E\).

We introduce mechanically biased cell--cell interactions through a prescribed orientation field
\(\nu=\nu(x,t)\in\mathbb{R}^d\).
For instance, \(\nu\) may be a normalized macroscopic direction extracted from the underlying agent-based model.
From \(\nu\) we build a symmetric positive semidefinite tensor \(D(\nu)\) describing anisotropic traction.
A prototypical example is \(D(\nu)=\nu\otimes\nu\), but our analysis allows general bounded
tensors \(D(\nu)=D(\nu)^\top\ge 0\). With mobility \(m(\rho)=\rho\) and traction strength \(\eta\ge 0\), the reduced distal model of \cite{Tsutsumi2025} then reads
\begin{align}
\partial_t\rho
&=\Div\bigl(\rho \nabla q\bigr)
+\varepsilon^{-1}\eta^2 \Div\bigl(\rho D(\nu) \nabla\rho\bigr),
\label{eq:PDE}
\\
q
&=\varepsilon^{-1}W'(\rho)-\varepsilon\Delta\rho.
\label{eq:q2}
\end{align}
Equivalently, with the total flux
\[
J(\rho,q):=\rho\nabla q+\varepsilon^{-1}\eta^2 \rho D(\nu)\nabla\rho,
\]
the equation reads \(\partial_t\rho=\Div J\). We prescribe the initial condition \(\rho(\cdot,0)=\rho_0\) in \(\Omega\).
We impose homogeneous Neumann/no-flux conditions
\begin{equation}\label{eq:BC}
\nabla\rho\cdot n = 0,
\qquad
J(\rho,q)\cdot n=0
\qquad\text{on }\partial\Omega\times(0,T).
\end{equation}
These boundary conditions ensure that all integrations by parts below produce no boundary contributions. Furthermore,
under \eqref{eq:BC}, the total mass \(\int_\Omega \rho(x,t)\dx\) is conserved,
which corresponds to a fixed total number of motile cells in the reduced model. 

Additionally, when \(\eta=0\),
\eqref{eq:PDE}--\eqref{eq:q2} reduces to a degenerate-mobility
Cahn--Hilliard equation as in \cite{ElliottGarcke1996}. In this case the
equation is the mobility-weighted gradient flow of the energy \eqref{eq:E}
and satisfies the standard energy-dissipation property. For \(\eta>0\), the
traction-driven term acts as an active conservative transport contribution.
It is not generated by the variational derivative of \eqref{eq:E}, and
therefore the energy identity contains an additional sign-indefinite term.
Similar perturbed energy-dissipation situations arise when Cahn--Hilliard
dynamics is subject to external forcing or feedback control, where one
typically retains an energy inequality with additional right-hand-side terms
that must be estimated. We refer to
\cite{azmi2025stabilization,egger2025feedback} for related stability
mechanisms and estimates in stabilized Cahn--Hilliard settings.

\begin{remark}[Derivation of the model]
The reduced equation \eqref{eq:PDE}--\eqref{eq:q2} is motivated by the agent-based model proposed in \cite{Tsutsumi2025}
for digit-forming organoids. In that model, distal and proximal cells interact through short-range repulsion and
cell--cell tractions whose strengths depend on cell type; in addition, tractions are biased relative to
morphogen gradients (for example, along a chemoattractive cue and/or orthogonal to a cue to model convergent extension).
A mean-field limit in which the number of cells tends to infinity while the interaction radius scales like a
small length \(\varepsilon\) yields PDEs for the distal and proximal densities, say \(\rho_\mathrm{D}\) and \(\rho_\mathrm{P}\),
whose supports encode the evolving tissue shapes \cite{Tsutsumi2025}.

In the asymptotic expansion described in \cite{Tsutsumi2025}, the leading-order terms (of order \(\varepsilon^{-1}\))
have an advective character: \(\rho_\mathrm{D}\) drifts up the prescribed gradient direction while \(\rho_\mathrm{P}\) drifts down it.
These terms vanish in the fully segregated regime \(\rho_\mathrm{D}\rho_\mathrm{P}=0\), and the next-order dynamics become a
modification of the Cahn--Hilliard equation in which traction bias produces an additional anisotropic transport
term capable of triggering fingering-type protrusions.
In the present work we focus on the corresponding reduced one-field description, obtained by restricting to a
single (distal) population and treating the macroscopic orientation field \(\nu\) as prescribed.
\end{remark}

\begin{remark}[Interpretation] The Cahn--Hilliard part \(\partial_t\rho=\Div(\rho\nabla q)\) can be viewed as density-driven rearrangement in a
phase-separating medium: the double-well structure of \(W\) favors segregation into low- and high-density regions,
while the gradient term penalizes sharp interfaces. The choice of mobility \(m(\rho)=\rho\) reflects that transport
is mediated by the presence of cells and degenerates in cell-free regions, which is consistent with the tissue
occupancy interpretation and is crucial for nonnegativity preservation in the weak formulation.

The second flux \(\varepsilon^{-1}\eta^2 \rho D(\nu)\nabla\rho\) represents mechanically biased traction-driven
transport. Here \(\eta\ge 0\) quantifies the strength of the bias, and \(D(\nu)\) encodes the anisotropy induced by a preferred direction field \(\nu\)
(for example, a morphogen-gradient direction extracted from the coarse-grained setting).
In organoid-like configurations one may take \(\nu\) to be radial or unidirectional, depending on
whether the relevant cue is approximately radial or externally imposed.
\end{remark}

\begin{remark}[Relation to energy-based anisotropic variants]
The mobility-weighted Cahn--Hilliard equation
\[
\partial_t\rho=\Div(\rho\nabla q),
\qquad
q=\frac{\delta E}{\delta\rho},
\]
is a generalized gradient flow of \(E\) with respect to the
mobility-induced Onsager metric. The traction flux in
\eqref{eq:PDE},
\[
\kappa\Div\bigl(\rho D(\nu)\nabla\rho\bigr),
\qquad \kappa=\varepsilon^{-1}\eta^2,
\]
has a different interpretation: it is prescribed as an additional conservative
anisotropic transport term and is not, in general, generated by the
variational derivative of the scalar energy \eqref{eq:E}.

To compare with a genuinely energy-based modification, suppose for simplicity
that the anisotropy is represented by a scalar coefficient
\(a=a(\nu(x,t))\). If one adds the quadratic term
\[
\frac{\kappa}{2}\int_\Omega a(x,t)\rho^2\dx
\]
to the energy, then the corresponding chemical potential becomes
\[
q_{\rm eb}
=
\varepsilon^{-1}W'(\rho)-\varepsilon\Delta\rho+\kappa a(x,t)\rho .
\]
The mobility-weighted gradient-flow equation then contains the additional flux
\[
\kappa\Div\bigl(\rho\nabla(a\rho)\bigr)
=
\kappa\Div\bigl(\rho a\nabla\rho\bigr)
+
\kappa\Div\bigl(\rho^2\nabla a\bigr).
\]
Thus, even in this scalar case, the energy-based model differs from the
traction model by the drift term
\(\kappa\Div(\rho^2\nabla a)\), unless \(a\) is spatially constant. For the
tensor-valued anisotropy used in this paper, for instance
\(D(\nu)=\nu\otimes\nu\), the term
\(\Div(\rho D(\nu)\nabla\rho)\) should therefore be understood as a prescribed
anisotropic traction flux rather than as the gradient flow of a local scalar
energy.
\end{remark}

\section{Existence of weak solutions}\label{sec:analysis}

We next state the standing assumptions used in the existence proof.

\begin{assumption}\label{ass:main}
Assume:
\begin{enumerate}\itemsep0em
\item \(\Omega\subset\mathbb{R}^d\) (\(d\in\{2,3\}\)) is bounded with \(C^{1,1}\) boundary and \(T>0\).
\item \(\varepsilon>0\) and \(\eta\ge 0\) are fixed, and \(\kappa:=\varepsilon^{-1}\eta^2\).
\item \(\rho_0\in H^1(\Omega)\), \(\rho_0\ge 0\) a.e.\ in \(\Omega\), and \(E(\rho_0)<\infty\).
\item \(\nu\in L^\infty(\Omega\times(0,T);\mathbb{R}^d)\) is given and
\(D(\nu)\in L^\infty(\Omega\times(0,T);\mathbb{R}^{d\times d})\) satisfies
\(D(\nu)=D(\nu)^\top\) and for some \(\Lambda>0\),
\[
0\le \xi^\top D(\nu(x,t)) \xi \le \Lambda|\xi|^2
\quad\text{for a.e. }(x,t)\in\Omega\times(0,T),\ \forall \xi\in\mathbb{R}^d.
\]
\item \(W\in C^2(\mathbb{R})\) and there exist constants \(c_0,c_1,C_W>0\) and an exponent \(p\) satisfying
\[
2<p<\infty \quad\text{if } d=2,
\qquad
2<p\le 4 \quad\text{if } d=3,
\]
such that
\[
W(s)\ge c_0|s|^p-c_1,\qquad W''(s)\ge -c_1\qquad\forall s\in\mathbb{R},
\]
and
\[
|W'(s)|\le C_W\bigl(1+|s|^{p-1}\bigr),\qquad
|W''(s)|\le C_W\bigl(1+|s|^{p-2}\bigr)\qquad\forall s\in\mathbb{R}.
\]
\end{enumerate}
\end{assumption}

\noindent
Assumption~\ref{ass:main} implies that \(W\) is \(c_1\)-semiconvex, i.e.
\(
W''(s)\ge -c_1\) for any \(s\in\mathbb R.\)
Equivalently, the map
\(
s\mapsto W(s)+\frac{c_1}{2}s^2
\)
is convex. Hence \(W\) admits the canonical decomposition
\[
W = W_1 + W_2,
\qquad
W_1(s):=W(s)+\frac{c_1}{2}s^2 \ \text{is convex},\qquad
W_2(s):=-\frac{c_1}{2}s^2 .
\]
In particular, \(W_2''\equiv -c_1\) and \(W_1''(s)=W''(s)+c_1\ge 0\) for all \(s\in\R\).
This interpretation aligns with implicit-explicit time discretizations as often used in numerical methods for Cahn--Hilliard equations.

\begin{definition}[Weak solution]\label{def:weak}
Let Assumption~\ref{ass:main} hold.
A pair \((\rho,J)\) is called a weak solution of \eqref{eq:PDE} on \((0,T)\) if:
\begin{enumerate}\itemsep0em
\item \textbf{Regularity and nonnegativity.}
\begin{align*}
&\rho\in L^\infty(0,T;H^1(\Omega))\cap L^2(0,T;H^2(\Omega))\cap C([0,T];L^2(\Omega)),\\
&\partial_t \rho \in L^{16/9}\bigl(0,T;(W^{1,16/7}(\Omega))'\bigr),\\
&\rho\ge 0 \ \text{a.e.\ in }\Omega_T,\qquad
J\in L^{16/9}(\Omega_T;\mathbb{R}^d).
\end{align*}

\item \textbf{Chemical potential.}
There exists \(q\in L^2(0,T;L^2(\Omega))\) such that for a.e.\ \(t\in(0,T)\) and all \(\psi\in H^1(\Omega)\),
\begin{equation}\label{eq:q_weak}
\int_\Omega q(t) \psi \dx
=
\varepsilon^{-1}\int_\Omega W'(\rho(t)) \psi \dx
+\varepsilon\int_\Omega \nabla\rho(t)\cdot\nabla\psi \dx.
\end{equation}

\item \textbf{Weak mass balance and initial condition.}
For all test functions \(\psi\in C_c^\infty([0,T)\times\overline\Omega)\),
\begin{equation}\label{eq:weak_mass}
\int_0^T\!\!\int_\Omega \rho\,\partial_t\psi\,\dx\dt
\;-
\int_0^T\!\!\int_\Omega J\cdot\nabla\psi\,\dx\dt
\;+
\int_\Omega \rho_0\,\psi(\cdot,0)\,\dx
=0.
\end{equation}

\item \textbf{Diffusive flux, weighted bound, and constitutive law.}
Define the diffusive part of the flux by
\begin{equation}\label{eq:Jdiff_def}
J^{\mathrm{diff}}:=J-\kappa \rho D(\nu)\nabla\rho \qquad \text{in }\mathcal{D}'(\Omega_T).
\end{equation}
We require
\begin{equation}\label{eq:weightedJdiff}
\int_0^T  \int_\Omega \frac{|J^{\mathrm{diff}}|^2}{\rho} \dx\dt<\infty,
\qquad
\frac{|J^{\mathrm{diff}}|^2}{\rho}:=0\ \text{on }\{\rho=0\}.
\end{equation}
Moreover, for every \(\zeta\in W^{1,\infty}(\Omega;\mathbb{R}^d)\) with \(\zeta\cdot n=0\) on \(\partial\Omega\),
\begin{equation}\label{eq:flux_id}
\int_0^T  \int_\Omega J^{\mathrm{diff}}\cdot \zeta \dx\dt
=
-\int_0^T  \int_\Omega q \Div(\rho \zeta) \dx\dt.
\end{equation}
\end{enumerate}
\end{definition}

\begin{remark}\label{rem:div_rho_zeta}
If \(\zeta\in W^{1,\infty}(\Omega;\mathbb{R}^d)\), then
\[
\Div(\rho\zeta)=\zeta\cdot\nabla\rho+\rho \Div\zeta\in L^2(\Omega_T),
\]
because \(\nabla\rho\in L^2(\Omega_T)\) and
\(
\rho\in L^\infty(0,T;H^1(\Omega))\hookrightarrow L^\infty(0,T;L^6(\Omega))
\)
for \(d\le 3\). Hence the right-hand side of \eqref{eq:flux_id} is well defined.
\end{remark}

\begin{remark}[Boundary conditions incorporated in the weak formulation]
The homogeneous Neumann condition for \(\rho\) is incorporated through \eqref{eq:q_weak}.
Indeed, since \(\rho\in L^2(0,T;H^2(\Omega))\) and \(q-\varepsilon^{-1}W'(\rho)\in L^2(\Omega_T)\),
the identity \eqref{eq:q_weak} is precisely the variational formulation of
\(
-\varepsilon \Delta \rho = q-\varepsilon^{-1}W'(\rho)
\)
with homogeneous Neumann boundary condition
\(
\partial_n \rho = 0\) on \(\partial\Omega
\)
for a.e.\ \(t\in(0,T)\), in the trace sense. Likewise, the no-flux condition for the total flux
is encoded by the weak mass balance \eqref{eq:weak_mass}. In particular, all integrations by parts used
below are compatible with the stated boundary conditions and produce no boundary contributions.
\end{remark}

We now state the main analytical result.

\begin{theorem}[Global existence]\label{thm:main}
Let Assumption~\ref{ass:main} hold. Then there exists a weak solution \((\rho,J)\) in the sense
of Definition~\ref{def:weak}. In particular, \(\rho\in C([0,T];L^2(\Omega))\) and
\(\rho(0)=\rho_0\) in \(L^2(\Omega)\). Moreover:
\begin{enumerate}\itemsep0em
\item \textbf{Mass conservation.}
The map \(t\mapsto\int_\Omega\rho(t)\dx\) is continuous on \([0,T]\) (by \(\rho\in C([0,T];L^2(\Omega))\))
and
\(
\int_\Omega \rho(t) \dx=\int_\Omega \rho_0 \dx\) for every \(t\in[0,T]\).
\item \textbf{Energy inequality.} For a.e.\ \(t\in(0,T)\),
\begin{equation}\label{eq:energyineq}
E(\rho(t))
+\frac12\int_0^t  \int_\Omega \frac{|J^{\mathrm{diff}}|^2}{\rho} \dx \ds
\le
E(\rho_0)
+C(\Lambda) \kappa^2\int_0^t  \int_\Omega \rho |\nabla\rho|^2 \dx \ds.
\end{equation}
\item \textbf{Entropy inequality.}
Let \(\Phi:[0,\infty)\to\mathbb{R}\) be defined by
\begin{equation}\label{eq:Phi_def}
\Phi(s):=
\begin{cases}
s\log s - s + 1, & s>0,\\[0.2em]
1, & s=0.
\end{cases}
\end{equation}
Then for a.e.\ \(t\in(0,T)\),
\begin{equation}\label{eq:entropyineq}
\begin{aligned}
&\int_\Omega \Phi(\rho(t)) \dx
+\varepsilon\int_0^t  \int_\Omega |\Delta\rho|^2 \dx \ds
+\kappa\int_0^t  \int_\Omega \nabla\rho\cdot D(\nu)\nabla\rho \dx \ds
\\&\le
\int_\Omega \Phi(\rho_0) \dx
+\frac{c_1}{\varepsilon}\int_0^t  \int_\Omega |\nabla\rho|^2 \dx \ds.
\end{aligned}
\end{equation}
\end{enumerate}
\end{theorem}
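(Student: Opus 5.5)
We follow the Elliott--Garcke strategy for degenerate Cahn--Hilliard equations: regularize, derive uniform energy and entropy bounds, and pass to the limit using compactness.

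\textbf{Regularization.} For \(\delta\in(0,1)\) we replace the mobility \(\rho\) by the strictly positive, bounded mobility \(m_\delta(s):=\max\{\delta,\min\{s_+,\delta^{-1}\}\}\), and the traction flux by \(\kappa\, m_\delta(\rho)D(\nu)\nabla\rho\). We take \(\Phi_\delta\) with \(\Phi_\delta''=1/m_\delta\) (extended quadratically below \(\delta\)). For fixed \(\delta\) a weak solution \((\rho_\delta,q_\delta)\) with \(\rho_\delta\in L^2(0,T;H^2)\), \(q_\delta\in L^2(0,T;H^1)\) exists by a Galerkin scheme in Neumann-Laplacian eigenfunctions. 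The Galerkin a priori bounds come from the energy and entropy identities below, tested directly by \(q\) and \(\Phi_\delta'(\rho)\), which are smooth at the discrete level.

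\textbf{Energy estimate.} We test with \(q_\delta\). Setting \(J^{\rm diff}_\delta=m_\delta\nabla q_\delta\), this gives
\[
\ddt E(\rho_\delta)+\int_\Omega m_\delta|\nabla q_\delta|^2\dx=-\kappa\int_\Omega m_\delta\nabla q_\delta\cdot D\nabla\rho_\delta\dx .
\]
By Young's inequality with \(|D\xi|^2\le\Lambda^2|\xi|^2\), the right-hand side is at most \(\tfrac12\int m_\delta|\nabla q_\delta|^2+\tfrac{\Lambda^2\kappa^2}{2}\int m_\delta|\nabla\rho_\delta|^2\). This already produces the structure of \eqref{eq:energyineq}, since \(\int m_\delta|\nabla q_\delta|^2=\int |J^{\rm diff}_\delta|^2/m_\delta\).

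\textbf{Entropy estimate.} We test with \(\Phi_\delta'(\rho_\delta)\). Since \(m_\delta\Phi_\delta''=1\), the mobility cancels and
\[
\ddt\int\Phi_\delta(\rho_\delta)+\int\nabla q_\delta\cdot\nabla\rho_\delta+\kappa\int\nabla\rho_\delta\cdot D\nabla\rho_\delta=0 .
\]
Integrating by parts with the Neumann condition, \(\int\nabla q_\delta\cdot\nabla\rho_\delta=\varepsilon\int|\Delta\rho_\delta|^2+\varepsilon^{-1}\int W''(\rho_\delta)|\nabla\rho_\delta|^2\), and the second term is bounded below by \(-c_1\varepsilon^{-1}\int|\nabla\rho_\delta|^2\). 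Interpolation gives \(\|\nabla\rho\|^2\le\|\rho\|\,\|\Delta\rho\|\), and \(\|\rho\|_{L^2}\) is controlled by \(\|\rho\|_{L^1}\) plus the entropy and gradient terms, or by the energy.

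\textbf{Closing the estimates (the main obstacle).} The bad term in the energy estimate is \(\int m_\delta|\nabla\rho_\delta|^2\le\|\rho_\delta\|_{L^3}\|\nabla\rho_\delta\|_{L^3}^2\). We bound it by Gagliardo--Nirenberg:
\[
\|\nabla\rho\|_{L^3}^2\lesssim\|\rho\|_{H^1}\|\rho\|_{H^2}\quad(d=3),\qquad \|\rho\|_{H^2}^2\lesssim\|\Delta\rho\|^2+\|\rho\|^2 .
\]
Adding a large multiple of the entropy inequality to the energy inequality, the \(L^2_tH^2_x\) dissipation from the entropy absorbs the \(\varepsilon\)-weighted square of this bound, leaving a term like \(C(1+E)^{2}\) on the right. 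This is superlinear, so Gronwall alone gives only local-in-time control, and this is the step we expect to be hardest. The first thing we would try is to keep the estimate exactly in the form \eqref{eq:energyineq}, with the \(\int\rho|\nabla\rho|^2\) term left on the right. Separately we would bound \(\int_0^T\!\int\rho|\nabla\rho|^2\) using \(\rho\in L^\infty L^1\) from mass conservation together with the \(L^2H^2\) and \(L^\infty L^1\log L\) bounds from the entropy. This only needs \(E\) to appear linearly, through \(\|\nabla\rho\|_{L^2}^2\) multiplied by an \(L^1_t\) function, so Gronwall applies globally. If the \(d=3\) exponents fail, we would use the energy bound \(\|\rho\|_{L^\infty H^1}\) only at the level of the interpolation and exploit \(p\le4\) in the growth of \(W\). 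Either way the outcome is uniform-in-\(\delta\) bounds:
\[
\rho_\delta\in L^\infty H^1\cap L^2H^2,\quad \int\!\!\int |J^{\rm diff}_\delta|^2/m_\delta\le C,\quad \Phi_\delta(\rho_\delta)\in L^\infty L^1 .
\]
The entropy bound yields \(\int (\rho_\delta)_-^2\le C\delta\), hence nonnegativity in the limit.

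\textbf{Passage to the limit.} We write \(J^{\rm diff}_\delta=\sqrt{m_\delta}\,(\sqrt{m_\delta}\nabla q_\delta)\), a product of a factor bounded in \(L^\infty L^6\) and a factor bounded in \(L^2L^2\). With the traction part \(m_\delta D\nabla\rho_\delta\) this bounds \(J_\delta\) in \(L^{16/9}\) after interpolating \(L^\infty H^1\) with \(L^2H^2\), and then \(\partial_t\rho_\delta\) in \(L^{16/9}(W^{1,16/7})'\). Aubin--Lions then gives \(\rho_\delta\to\rho\) strongly in \(L^2H^1\cap C([0,T];L^2)\). Weak limits give \(J\) and \(q\): \(q_\delta\) is bounded in \(L^2L^2\) from the \(H^2\) bound and the growth of \(W'\), and \eqref{eq:q_weak} passes to the limit. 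For \eqref{eq:flux_id} we use the regularized identity \(\int J^{\rm diff}_\delta\cdot\zeta=-\int q_\delta\Div(m_\delta\zeta)\) with strong convergence of \(\Div(m_\delta(\rho_\delta)\zeta)\) in \(L^2\). The weighted bound \eqref{eq:weightedJdiff} and the inequalities \eqref{eq:energyineq}, \eqref{eq:entropyineq} follow by weak lower semicontinuity; for the weighted bound we use joint convexity of \((j,r)\mapsto|j|^2/r\), and for the entropy we use Fatou's lemma. Mass conservation follows by testing with \(\psi\) independent of \(x\).
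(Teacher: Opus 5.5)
Your plan is the paper's argument with the regularization collapsed to a single parameter. The main ingredients all coincide with Steps 2--7:
the energy estimate by testing with $q$ and Young's inequality;
the mobility-matched entropy, which turns the traction term into the coercive $\kappa\int_\Omega\nabla\rho\cdot D(\nu)\nabla\rho\dx$;
nonnegativity from $r_-^2\le2\delta\Phi_\delta(r)$;
a linear Gr\"onwall argument for the energy, with an $L^1_t$ coefficient coming from the entropy's $L^2_tH^2_x$ bound;
the $L^{16/9}$ flux bound;
and flux identification through $\int J^{\mathrm{diff}}_\delta\cdot\zeta=-\int q_\delta\Div(m_\delta(\rho_\delta)\zeta)$.

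The paper instead first sends $\delta\downarrow0$ at fixed $L$, using $L$-dependent bounds that are cheap because $m_{\delta,L}\le L$. Only then does it run the delicate closure, for $\rho_L\ge0$, where $\|\rho_L\|_{L^1}=\int_\Omega\rho_0\dx$ exactly. Your cut-off $m_\delta=\max\{\delta,\min\{s_+,\delta^{-1}\}\}$ saves a limit passage, and truncating $W$ is indeed unnecessary for $p\le4$. The price is twofold. The closure must be done for possibly negative $\rho_\delta$, with $\|\rho_\delta\|_{L^1}=\int_\Omega\rho_0\dx+2\|(\rho_\delta)_-\|_{L^1}$ controlled through the entropy. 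And both cut-offs must be handled at once when proving $m_\delta(\rho_\delta)\to\rho$ in $L^2(0,T;H^1(\Omega))$. You assert that convergence without proof; the arguments of Lemma~\ref{lem:m_deltaL_to_mL_H1} and Step~7 combine to give it. For the weighted bound, joint convexity of $(j,r)\mapsto|j|^2/r$ is a clean substitute for the paper's $K_L$ decomposition.

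Three steps need tightening.

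First, what resolves the obstacle you flag is that the entropy inequality closes \emph{without} the energy. As in Step~5, absorb $\frac{c_1}{\varepsilon}\|\nabla\rho_\delta\|^2$ into $\frac{\varepsilon}{2}\|\Delta\rho_\delta\|^2$ using $\|\nabla\rho\|\le\tau\|\rho\|_{H^2}+C_\tau\|\rho\|_{L^1}$ and $\|\rho\|_{H^2}\le C\|\Delta\rho\|+C\|\rho\|_{L^1}$. Your alternative ``or by the energy'' re-couples the two estimates, and that coupling is exactly what produced $(1+E)^2$. Moreover, your own bound $\|\rho\|_{L^3}\|\nabla\rho\|_{L^3}^2\le C(1+E)\|\rho\|_{H^2}$ is already linear in $E$, with a coefficient in $L^2(0,T)$. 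The superlinearity came only from squaring $\|\rho\|_{H^2}$ with Young's inequality.

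Second, $\Phi_\delta'(\rho_N)$ does not belong to the Galerkin space, so it is not an admissible test function at the discrete level. At fixed $\delta$ the energy alone yields the Galerkin bounds, since $m_\delta\le\delta^{-1}$. The entropy identity must then be derived after $N\to\infty$ by a chain-rule argument, as in Step~3.

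Third, the right-hand sides of \eqref{eq:energyineq} and \eqref{eq:entropyineq} require convergence, not lower semicontinuity. For the traction term you need $\int_0^t\!\int_\Omega m_\delta(\rho_\delta)|\nabla\rho_\delta|^2\dx\ds\to\int_0^t\!\int_\Omega\rho|\nabla\rho|^2\dx\ds$. This follows from your uniform bounds by Vitali's theorem, as in Step~7.
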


\noindent
Since the proof of Theorem~\ref{thm:main} is rather technical, we first summarize its structure.

\begin{enumerate}[label=\bfseries Step \arabic*:, leftmargin=*, align=left]
\item \textbf{Regularization.}
We remove the mobility degeneracy and truncate the potential. This produces a uniformly parabolic approximation.

\item \textbf{Existence for fixed \((\delta,L)\) and energy inequality.}
For fixed \((\delta,L)\) we solve the regularized system by Galerkin approximation. Testing with the chemical potential yields an energy inequality; the traction cross term is controlled by Young's inequality and the bound \(0\le D(\nu)\le \Lambda I\).

\item \textbf{Entropy estimate at the \((\delta,L)\)-level and nonnegativity.}
We test with a mobility-matched entropy variable adapted to the one-sided degeneracy \(m(\rho)=\rho\).
This yields an entropy inequality controlling the fourth-order term and the anisotropic traction contribution, and it enforces nonnegativity in the limit \(\delta\downarrow0\).

\item \textbf{Passage \(\delta\downarrow 0\) for fixed \(L\).}
Using the uniform bounds and compactness, we pass to the limit \(\delta\downarrow0\) and obtain a nonnegative solution of the \(L\)-truncated system.

\item \textbf{Uniform (in \(L\)) entropy estimate.}
We introduce a truncated entropy \(\Phi_L\) associated with \(m_L\).
Since \(\Phi_L'\) is singular at \(0\), we do not test the degenerate \(L\)-system directly.
Instead, we pass to the limit \(\delta\downarrow0\) in the \((\delta,L)\)-entropy inequality and obtain an entropy estimate for the \(L\)-system.
Using mass conservation and elliptic estimates, we absorb the lower-order term generated by the concave part of \(W\), which yields a uniform control of the higher-order regularity.

\item \textbf{Uniform (in \(L\)) energy estimate.}
We combine the \(L\)-level energy inequality with the entropy bounds to absorb the indefinite traction cross term and obtain uniform-in-\(L\) energy bounds via a Grönwall argument.

\item \textbf{Passage \(L\to\infty\).}
By compactness, we extract a subsequence converging to a nonnegative limit \(\rho\).
Using strong convergence of \(m_L(\rho_L)\) in \(L^2(0,T;H^1(\Omega))\), we identify the diffusive flux and pass to the constitutive law.
The chemical potential is identified in space-time weak form via strong convergence of \(W_L'(\rho_L)\).
Finally, lower semicontinuity and the strong convergence of the traction term yield the energy and entropy inequalities, while the improved time regularity and the resulting compactness in \(C([0,T];L^2(\Omega))\) yield the initial condition.
\end{enumerate}

\subsubsection*{Step 1: Regularization} 
For \(\delta\in(0,1)\) and \(L>1\) define
\[
m_{\delta,L}(r):=\min\{\max\{\delta,r\},L\},\qquad r\in\mathbb{R}.
\]
Then \(\delta\le m_{\delta,L}(r)\le L\) for all \(r\in\mathbb{R}\) and
\(m_{\delta,L}(r)\to m_L(r):=\min\{r_+,L\}\) as \(\delta\downarrow 0\).
Furthermore, we choose \(W_L\in C^2(\mathbb{R})\) such that the following lemma holds.

\begin{lemma}[Truncation of the potential]\label{lem:WL}
Assume \(W\in C^2(\mathbb{R})\) satisfies Assumption~\ref{ass:main}(5).
Then there exists a family \((W_L)_{L>1}\subset C^2(\mathbb{R})\) such that for every \(L>1\):
\begin{enumerate}\itemsep0em
\item \(W_L(s)=W(s)\) and \(W_L'(s)=W'(s)\) for all \(|s|\le L\).
\item \(W_L''(s)\ge -c_1\) for all \(s\in\mathbb{R}\).
\item \emph{Uniform polynomial growth.}
There exists \(C>0\), independent of \(L\), such that
\[
|W_L'(s)|\le C\bigl(1+|s|^{p-1}\bigr),\qquad
|W_L''(s)|\le C\bigl(1+|s|^{p-2}\bigr)\qquad(\forall s\in\mathbb{R}).
\]
\item \emph{Coercivity.}
There exist \(c_2,c_3>0\), independent of \(L\), such that
\[
W_L(s)\ge c_2|s|^p-c_3\qquad(\forall s\in\mathbb{R}).
\]
\item \emph{Consistency.}
\(W_L\to W\) and \(W_L'\to W'\) locally uniformly on \(\mathbb{R}\) as \(L\to\infty\).
\end{enumerate}
\end{lemma}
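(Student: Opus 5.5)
The statement only asks for existence of a family with properties (1)--(5), and none of these properties asks $W_L$ to grow more slowly than $W$. Indeed, (3) and (4) require exactly the $p$-growth and $p$-coercivity of $W$, uniformly in $L$. So I would first check whether the trivial choice $W_L:=W$ for every $L>1$ already works. Property (1) holds identically. Property (2) is the semiconvexity $W''\ge -c_1$ from Assumption~\ref{ass:main}(5). Property (3) holds with $C:=C_W$. Property (4) holds with $c_2:=c_0$ and $c_3:=c_1$. Property (5) is trivial since $W_L-W\equiv 0$. Each verification is a one-line appeal to Assumption~\ref{ass:main}(5), and none of the constants depend on $L$. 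Under the literal statement the lemma is thus immediate, and this is the proof I would write first.

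If the later steps need a genuine modification beyond the level $L$, I would instead build $W_L$ through its second derivative, keeping the semiconvexity bound and the $p$-growth built in. For $|s|\le L$ I set $W_L''=W''$. For $s>L$ I take a continuous function $g_L$ with $g_L(L)=W''(L)$ that interpolates on $[L,L+1]$ towards a model profile $c\,s^{p-2}$, where $c$ depends only on $c_0,C_W,p$. During the interpolation I keep $g_L\ge -c_1$ and $|g_L(s)|\le C(1+|s|^{p-2})$. For $s<-L$ I proceed symmetrically. I then integrate twice, matching $W_L(\pm L)=W(\pm L)$ and $W_L'(\pm L)=W'(\pm L)$. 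This gives $W_L\in C^2(\mathbb R)$ together with (1) and (2). For (3), integrating $|W_L''|\le C(1+|s|^{p-2})$ from $\pm L$ and using $|W'(\pm L)|\le C_W(1+L^{p-1})$ produces $|W_L'(s)|\le C(1+|s|^{p-1})$ with $C$ independent of $L$. Property (5) follows because on any compact set $K$ we have $W_L=W$ and $W_L'=W'$ as soon as $L>\max_K|s|$.

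The main obstacle in this nontrivial variant is the coercivity (4), uniformly in $L$. The difficulty is that near $s=L$ one only knows $W(L)\ge c_0L^p-c_1$, while $W'(L)$ could be small or even negative. The second-order Taylor extension must still dominate $c_2|s|^p-c_3$ for all $s>L$. To handle this, I would split $W_L(s)=W(L)+W'(L)(s-L)+\int_L^s\!\int_L^r g_L$. I would bound the linear term from below by $-C_W(1+L^{p-1})(s-L)$, and absorb it by the double integral of the $c\,s^{p-2}$ profile once $s\ge 2L$, choosing $c$ large depending only on $C_W,p$. On the interval $[L,2L]$, I would instead use $W(L)\ge c_0L^p-c_1$ together with the lower bound $g_L\ge -c_1$. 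Together these give $W_L(s)\ge \tfrac{c_0}{2^{p+1}}s^p-C$ there, with $C$ independent of $L$. Since the trivial choice already settles the lemma as stated, I would present $W_L=W$ as the proof, and keep this construction as a remark for the case where a modified tail is needed.
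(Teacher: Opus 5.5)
Your proof is correct, and it takes a genuinely different and simpler route than the paper. Assumption~\ref{ass:main}(5) already provides $W\in C^2(\mathbb{R})$ and $W''\ge -c_1$. It also provides the bounds $|W'|\le C_W(1+|s|^{p-1})$, $|W''|\le C_W(1+|s|^{p-2})$ and $W\ge c_0|s|^p-c_1$ with $c_0,c_1>0$. Hence $W_L:=W$ satisfies (1)--(5) with $C=C_W$, $c_2=c_0$, $c_3=c_1$, and (1) and (5) hold trivially.

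The paper instead sketches a genuine modification outside $[-L,L]$: it attaches a polynomial with matching value and slope at $\pm L$, then mollifies. It only asserts the uniform bounds. In particular it does not address the $L$-uniform coercivity (4), which is the one delicate point of such a construction. The smoothing near $\pm L$ would also have to be arranged so as not to disturb the exact identity (1). A modified tail would only pay off if one needed a property the lemma does not list, e.g.\ quadratic growth or a globally Lipschitz $W_L'$ for fixed $L$. Since (3) asks for exactly the $p$-growth of $W$, it buys nothing here.

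Your observation actually shows more. Check the later uses: local Lipschitz continuity of $W_L'$ in the Galerkin ODE, the Nemytskii convergences based on (3), coercivity in Steps~2, 6 and~7, and the term $(W_L'-W')(\rho_L)$ in Step~7, which simply vanishes. All of them go through with $W_L=W$, so the truncation of the potential can be removed from the whole argument. Only the truncation of the mobility does real work.

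On your optional construction: on $[L,2L]$ the bound $W'(L)(s-L)\ge -C_W(1+L^{p-1})(s-L)$ loses a term of order $C_W L^p$. Nothing forces $W(L)\ge c_0L^p-c_1$ to compensate it, so that step does not close as written. The missing input is semiconvexity. Integrating $W'(s)\le W'(L)+c_1(L-s)$ over $s\in[0,L]$ gives $W'(L)\ge c_0L^{p-1}-\tfrac{c_1}{2}L-c_1-|W(0)|$, which is bounded below uniformly in $L>1$ because $p>2$. With this, your claimed lower bound $c_0 2^{-p-1}s^p-C$ on $[L,2L]$ does follow.
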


\begin{proof}
A standard construction modifies \(W\) smoothly outside \([-L,L]\) by attaching a polynomial with matching values and
first derivatives at \(\pm L\), and then mollifying near \(\pm L\) to retain \(C^2\) regularity and the lower bound on \(W_L''\).
Uniform growth bounds follow from the polynomial-growth assumptions on \(W'\) and \(W''\).
\end{proof}

Then, for fixed \((\delta,L)\), we consider the regularized system
\begin{align}
\partial_t\rho_{\delta,L}
&=\Div\bigl(m_{\delta,L}(\rho_{\delta,L})\nabla q_{\delta,L}\bigr)
+\kappa \Div\bigl(m_{\delta,L}(\rho_{\delta,L})D(\nu)\nabla\rho_{\delta,L}\bigr),
\label{eq:regPDE_deltaL}\\
q_{\delta,L}
&=\varepsilon^{-1}W_L'(\rho_{\delta,L})-\varepsilon\Delta\rho_{\delta,L},
\label{eq:regq_deltaL}
\end{align}
with boundary conditions
\[
\nabla\rho_{\delta,L}\cdot n = 0,
\qquad
\bigl(m_{\delta,L}(\rho_{\delta,L})\nabla q_{\delta,L}
+\kappa m_{\delta,L}(\rho_{\delta,L})D(\nu)\nabla\rho_{\delta,L}\bigr)\cdot n = 0
\quad\text{on }\partial\Omega\times(0,T),
\]
and initial condition \(\rho_{\delta,L}(0)=\rho_0\).
We also set the associated energy
\[
E_L(\rho):=\int_\Omega\Bigl(\varepsilon^{-1}W_L(\rho)+\frac{\varepsilon}{2}|\nabla\rho|^2\Bigr) \dx.
\]

\subsubsection*{Step 2: Existence for fixed \((\delta,L)\) and energy inequality}

\begin{lemma}\label{lem:exist_deltaL}
For every \(\delta\in(0,1)\) and \(L>1\) there exists a weak solution \((\rho_{\delta,L},q_{\delta,L})\) to \eqref{eq:regPDE_deltaL}--\eqref{eq:regq_deltaL} with
\[
\rho_{\delta,L}\in H^1(0,T;H^{-1}(\Omega))\cap L^\infty(0,T;H^1(\Omega))\cap L^2(0,T;H^2(\Omega)),\qquad
q_{\delta,L}\in L^2(0,T;H^1(\Omega)),
\]
satisfying mass conservation and, for a.e.\ \(t\in(0,T)\),
\begin{equation}\label{eq:energy_deltaL}
\begin{aligned}
&E_L(\rho_{\delta,L}(t))
+\frac12\int_0^t  \int_\Omega m_{\delta,L}(\rho_{\delta,L}) |\nabla q_{\delta,L}|^2 \dx\ds
\\ &\le
E_L(\rho_0)
+C(\Lambda)\kappa^2\int_0^t  \int_\Omega m_{\delta,L}(\rho_{\delta,L}) |\nabla\rho_{\delta,L}|^2 \dx\ds.
\end{aligned}
\end{equation}
\end{lemma}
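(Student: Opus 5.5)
We construct the solution by a Faedo--Galerkin scheme based on the Neumann eigenfunctions \(\{w_k\}\) of \(-\Delta\) on \(\Omega\). These are smooth enough for \(C^{1,1}\) boundaries, orthogonal in \(L^2\) and \(H^1\), and \(w_1\) is constant. We seek
\[
\rho^N(t)=\sum_{k\le N}a_k(t)w_k,\qquad q^N(t)=\sum_{k\le N}b_k(t)w_k,
\]
satisfying the projected equations
\[
\int_\Omega\partial_t\rho^N w_k\dx=-\int_\Omega m_{\delta,L}(\rho^N)\bigl(\nabla q^N+\kappa D(\nu)\nabla\rho^N\bigr)\cdot\nabla w_k\dx,
\]
\[
\int_\Omega q^N w_k\dx=\varepsilon^{-1}\int_\Omega W_L'(\rho^N)w_k\dx+\varepsilon\int_\Omega\nabla\rho^N\cdot\nabla w_k\dx,
\]
with \(\rho^N(0)=P_N\rho_0\). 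The second relation expresses \(b\) explicitly as a function of \(a\). Substituting gives an ODE system for \(a\) whose right-hand side is continuous in \(a\) and measurable in \(t\) (since \(\nu\) is only \(L^\infty\)), and locally bounded. Carathéodory's theorem therefore gives a local solution. Testing with \(w_1\) gives conservation of mass.

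\textbf{Energy estimate.} We test the first equation with \(q^N\) and the second with \(\partial_t\rho^N\); both are admissible since they lie in the Galerkin space. This yields
\[
\ddt E_L(\rho^N)+\int_\Omega m|\nabla q^N|^2\dx=-\kappa\int_\Omega m\,\nabla q^N\cdot D\nabla\rho^N\dx,
\]
with \(m=m_{\delta,L}(\rho^N)\). By Young's inequality and \(|D\xi|\le\Lambda|\xi|\), the right-hand side is at most
\[
\tfrac12\int_\Omega m|\nabla q^N|^2\dx+\tfrac{\Lambda^2}{2}\kappa^2\int_\Omega m|\nabla\rho^N|^2\dx.
\]
This is exactly \eqref{eq:energy_deltaL} at the discrete level with \(C(\Lambda)=\Lambda^2/2\).

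\textbf{Uniform bounds.} Since \(m\le L\), the last term is bounded by \(L\Lambda^2\kappa^2\|\nabla\rho^N\|_2^2\le C(L)E_L(\rho^N)+C\), using the coercivity in Lemma~\ref{lem:WL}(4). Grönwall's lemma then gives uniform-in-\(N\) bounds:
\begin{itemize}
\item \(\rho^N\in L^\infty(0,T;H^1)\) and \(W_L(\rho^N)\in L^\infty(0,T;L^1)\);
\item \(\int\!\!\int m|\nabla q^N|^2\le C(\delta,L)\), hence \(\nabla q^N\in L^2(L^2)\) because \(m\ge\delta\).
\end{itemize}
The solution therefore extends to \([0,T]\). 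For the mean of \(q^N\), we test with \(w_1\) and use \(|W_L'(s)|\le C(1+|s|^{p-1})\) together with \(H^1\hookrightarrow L^6\) and \(p\le4\). This bounds \(\int q^N\), so \(q^N\in L^2(H^1)\) by Poincaré's inequality.

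Next, \(-\varepsilon\Delta\rho^N=P_N(q^N-\varepsilon^{-1}W_L'(\rho^N))\). Here \(W_L'(\rho^N)\in L^\infty(L^2)\) for \(d=3\), since \(2(p-1)\le6\), and for any \(p\) when \(d=2\). Elliptic regularity then gives \(\rho^N\in L^2(H^2)\). Finally, since \(m\le L\), the flux is bounded in \(L^2(\Omega_T)\), which gives \(\partial_t\rho^N\in L^2(H^{-1})\) by duality, because \(P_N\) is \(H^1\)-stable.

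\textbf{Passage \(N\to\infty\).} Aubin--Lions yields strong convergence \(\rho^N\to\rho\) in \(L^2(H^1)\cap C(L^2)\), and a.e. Hence:
\begin{itemize}
\item \(m(\rho^N)\to m(\rho)\) a.e. and boundedly;
\item \(W_L'(\rho^N)\to W_L'(\rho)\) strongly in \(L^2\), by Vitali's theorem and the growth bounds;
\item \(q^N\rightharpoonup q\) in \(L^2(H^1)\).
\end{itemize}
The products \(m(\rho^N)\nabla q^N\) and \(m(\rho^N)D\nabla\rho^N\) converge weakly, since one factor converges strongly (dominated convergence) and the other weakly. The limit equations hold for fixed test functions \(w_k\), and by density for all of \(H^1\).

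The energy inequality passes to the limit in integrated form. We use weak lower semicontinuity of \(\int\!\!\int|\sqrt{m(\rho^N)}\nabla q^N|^2\), noting that \(\sqrt{m(\rho^N)}\nabla q^N\rightharpoonup\sqrt{m(\rho)}\nabla q\). We also use lower semicontinuity of \(E_L(\rho^N(t))\) for a.e.\ \(t\), via a.e.\ convergence and Fatou's lemma for \(W_L\) bounded below. The right-hand traction term converges by the strong \(L^2(H^1)\) convergence. Finally, \(E_L(P_N\rho_0)\to E_L(\rho_0)\) because \(P_N\rho_0\to\rho_0\) in \(H^1\) and the growth of \(W_L\) is at most \(|s|^p\) with \(H^1\hookrightarrow L^p\).

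The main obstacle is the nonsmooth, only measurable-in-time coefficient \(D(\nu)\) combined with the nonlinear \(m\). This is handled by working with Carathéodory ODEs and placing all strong convergence on \(\rho^N\), so that every nonlinear coefficient converges a.e. Controlling the indefinite cross term is harmless at this level because \(m\le L\).
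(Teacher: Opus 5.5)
Your proposal is correct and follows the paper's proof essentially step by step: the Neumann-eigenfunction Galerkin scheme, testing with $q^N$ and $\partial_t\rho^N$, Young's inequality with $C(\Lambda)=\Lambda^2/2$, Gr\"onwall via $m_{\delta,L}\le L$ and the coercivity of $W_L$, then Aubin--Lions and weak lower semicontinuity. The deviations are minor. You get the $L^2(0,T;H^2(\Omega))$ bound from the identity $-\varepsilon\Delta\rho^N=P_N\bigl(q^N-\varepsilon^{-1}W_L'(\rho^N)\bigr)$ rather than by testing with $-\Delta\rho^N$. You use Vitali instead of the paper's explicit H\"older estimate for $W_L'(\rho^N)$; in the borderline case $d=3$, $p=4$ this needs the strong $L^2(0,T;L^6(\Omega))$ convergence or the $L^2(0,T;H^2(\Omega))$ bound for equi-integrability. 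Your appeal to Carath\'eodory's theorem is actually the more appropriate ODE result, since $D(\nu)$ is only measurable in time, whereas the paper invokes Cauchy--Lipschitz.
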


\begin{proof}
We proceed by a Galerkin approximation, derive estimates that are uniform in the Galerkin dimension,
and then pass to the limit. \smallskip

\noindent\textit{Galerkin approximation.} Let \(\{w_k\}_{k\ge1}\) be the eigenfunctions of the Neumann--Laplacian on \(\Omega\), normalized so that
they form an orthonormal basis of \(L^2(\Omega)\) and satisfy
\[
-\Delta w_k=\lambda_k w_k \quad\text{in }\Omega,\qquad \nabla w_k\cdot n=0 \quad\text{on }\partial\Omega.
\]
We recall that \(w_1\) is constant and that \(\{w_k\}_{k\ge1}\) is also orthogonal in \(H^1(\Omega)\).
For \(N\in\mathbb N\) define
\[
V_N:=\mathrm{span}\{w_1,\dots,w_N\}\subset H^1(\Omega).
\]
We seek approximate solutions
\(\rho_{\delta,L,N},\,q_{\delta,L,N}\in C^1([0,T_N);V_N)\) such that
for all test functions \(\varphi,\psi\in V_N\) and a.e.\ \(t\in(0,T_N)\),
\begin{subequations}\label{eq:Galerkin_deltaL}
\begin{align}
(\partial_t\rho_{\delta,L,N},\varphi)
&+\bigl(m_{\delta,L}(\rho_{\delta,L,N})\nabla q_{\delta,L,N},\nabla\varphi\bigr)
+\kappa\bigl(m_{\delta,L}(\rho_{\delta,L,N})D(\nu)\nabla\rho_{\delta,L,N},\nabla\varphi\bigr)=0,
\label{eq:G1_deltaL_full}
\\
(q_{\delta,L,N},\psi)
&=\varepsilon^{-1}\bigl(W_L'(\rho_{\delta,L,N}),\psi\bigr)
+\varepsilon\bigl(\nabla\rho_{\delta,L,N},\nabla\psi\bigr).
\label{eq:G2_deltaL_full}
\end{align}
\end{subequations}
The initial value is chosen as \(\rho_{\delta,L,N}(0)=P_N\rho_0\), where \(P_N\) denotes the \(L^2\)-orthogonal projection onto \(V_N\).

Since \(V_N\) is finite-dimensional, \eqref{eq:G2_deltaL_full} determines \(q_{\delta,L,N}\) uniquely from
\(\rho_{\delta,L,N}\) at every time. Indeed, if
\[
\rho_{\delta,L,N}(t)=\sum_{i=1}^N a_i(t) w_i,
\qquad
q_{\delta,L,N}(t)=\sum_{i=1}^N b_i(t) w_i,
\]
then \eqref{eq:G2_deltaL_full} is a linear system for the coefficients \(b_i(t)\) with invertible mass matrix.
Substituting this relation into \eqref{eq:G1_deltaL_full} yields an ODE system for the vector
\(a(t)=(a_1(t),\dots,a_N(t))\).
Now \(m_{\delta,L}\) is globally Lipschitz and bounded, and \(W_L'\) is locally Lipschitz.
Therefore the right-hand side of the ODE system is locally Lipschitz on \(\mathbb R^N\).
By the Cauchy--Lipschitz theorem, there exists a unique maximal solution on some interval
\([0,T_N)\) with \(0<T_N\le T\). \smallskip

\noindent\textit{Uniform estimates.} We next derive the energy inequality at the Galerkin level. Since \(w_1\) is constant, testing \eqref{eq:G1_deltaL_full} with \(\varphi=w_1\) gives
\[
(\partial_t\rho_{\delta,L,N},w_1)=0,
\]
and hence
\begin{equation}\label{eq:mass_Galerkin_deltaL}
\int_\Omega \rho_{\delta,L,N}(t)\dx
=
\int_\Omega P_N\rho_0\dx
=
\int_\Omega \rho_0\dx
\qquad\text{for all }t\in[0,T_N).
\end{equation}
Moreover, since \(q_{\delta,L,N}\in V_N\), we may choose \(\varphi=q_{\delta,L,N}\) in \eqref{eq:G1_deltaL_full}, which gives
\begin{equation}\label{eq:Galerkin_test_q}
(\partial_t\rho_{\delta,L,N},q_{\delta,L,N})
+\!\int_\Omega\!\! m_{\delta,L}(\rho_{\delta,L,N})|\nabla q_{\delta,L,N}|^2\dx
+\kappa\!\int_\Omega\!\! m_{\delta,L}(\rho_{\delta,L,N})D(\nu)\nabla\rho_{\delta,L,N}\cdot\nabla q_{\delta,L,N}\dx
=0.
\end{equation}
Since also \(\partial_t\rho_{\delta,L,N}\in V_N\), we may choose \(\psi=\partial_t\rho_{\delta,L,N}\) in
\eqref{eq:G2_deltaL_full}. This yields
\[
\begin{aligned}
(q_{\delta,L,N},\partial_t\rho_{\delta,L,N})
&=
\varepsilon^{-1}\bigl(W_L'(\rho_{\delta,L,N}),\partial_t\rho_{\delta,L,N}\bigr)
+\varepsilon\bigl(\nabla\rho_{\delta,L,N},\nabla\partial_t\rho_{\delta,L,N}\bigr)
\\
&=
\ddt E_L(\rho_{\delta,L,N}).
\end{aligned}
\]
Combining this identity with \eqref{eq:Galerkin_test_q}, we arrive at
\[
\ddt E_L(\rho_{\delta,L,N})
+\int_\Omega m_{\delta,L}(\rho_{\delta,L,N})|\nabla q_{\delta,L,N}|^2\dx
=
-\kappa\int_\Omega m_{\delta,L}(\rho_{\delta,L,N})D(\nu)\nabla\rho_{\delta,L,N}\cdot\nabla q_{\delta,L,N}\dx.
\]
Using \(0\le D(\nu)\le \Lambda I\) a.e.\ and Young's inequality with parameter \(1/2\),
\[
\begin{aligned}
\kappa\Bigl|\int_\Omega m_{\delta,L}(\rho_{\delta,L,N})
D(\nu)\nabla\rho_{\delta,L,N}\cdot\nabla q_{\delta,L,N}\dx\Bigr|
&\le
\kappa\Lambda\int_\Omega m_{\delta,L}(\rho_{\delta,L,N})
|\nabla\rho_{\delta,L,N}| |\nabla q_{\delta,L,N}|\dx
\\
&\le
\frac12\int_\Omega m_{\delta,L}(\rho_{\delta,L,N})|\nabla q_{\delta,L,N}|^2\dx
\\&\quad
+\frac{\Lambda^2}{2}\kappa^2\int_\Omega m_{\delta,L}(\rho_{\delta,L,N})|\nabla\rho_{\delta,L,N}|^2\dx.
\end{aligned}
\]
Setting \(C(\Lambda):=\Lambda^2/2\), we obtain
\begin{equation}\label{eq:energy_Galerkin_deltaL}
\ddt E_L(\rho_{\delta,L,N})
+\frac12\int_\Omega m_{\delta,L}(\rho_{\delta,L,N})|\nabla q_{\delta,L,N}|^2\dx
\le
C(\Lambda)\kappa^2\int_\Omega m_{\delta,L}(\rho_{\delta,L,N})|\nabla\rho_{\delta,L,N}|^2\dx.
\end{equation}
To control the right-hand side, note that \(m_{\delta,L}(r)\le L\), and by the coercivity of \(W_L\),
\[
E_L(\rho_{\delta,L,N})
=
\int_\Omega \Bigl(\varepsilon^{-1}W_L(\rho_{\delta,L,N})
+\frac{\varepsilon}{2}|\nabla\rho_{\delta,L,N}|^2\Bigr)\dx
\ge
\frac{\varepsilon}{2}\|\nabla\rho_{\delta,L,N}\|_{L^2(\Omega)}^2-C.
\]
Hence
\[
\int_\Omega m_{\delta,L}(\rho_{\delta,L,N})|\nabla\rho_{\delta,L,N}|^2\dx
\le
L\|\nabla\rho_{\delta,L,N}\|_{L^2(\Omega)}^2
\le C_L\bigl(1+E_L(\rho_{\delta,L,N})\bigr).
\]
Inserting this into \eqref{eq:energy_Galerkin_deltaL}, we obtain
\[
\ddt \bigl(1+E_L(\rho_{\delta,L,N})\bigr)
\le C_{L}\bigl(1+E_L(\rho_{\delta,L,N})\bigr).
\]
By Grönwall's lemma,
\begin{equation}\label{eq:energy_uniformN_deltaL}
\sup_{t\in[0,T_N)}E_L(\rho_{\delta,L,N}(t))
\le C_{T,L}\bigl(1+E_L(P_N\rho_0)\bigr),
\end{equation}
where the constant \(C_{T,L}\) is independent of \(N\) and \(\delta\). Since \(P_N\rho_0\to \rho_0\) strongly in \(H^1(\Omega)\), and \(H^1(\Omega)\hookrightarrow L^p(\Omega)\) while \(W_L\) has polynomial growth of order \(p\), we have
\(
E_L(P_N\rho_0)\to E_L(\rho_0)
\).
In particular, \(E_L(P_N\rho_0)\le C_L\) uniformly in \(N\). Thus \((\rho_{\delta,L,N})_N\) is uniformly bounded in \(L^\infty(0,T_N;H^1(\Omega))\), and integrating
\eqref{eq:energy_Galerkin_deltaL} in time gives
\begin{equation}\label{eq:diss_uniformN_deltaL}
\int_0^{T_N}\!\!\int_\Omega m_{\delta,L}(\rho_{\delta,L,N})|\nabla q_{\delta,L,N}|^2\dx\dt
\le C_{T,\delta,L}.
\end{equation}
Since \(m_{\delta,L}\ge\delta\), this implies
\begin{equation}\label{eq:q_grad_uniformN_deltaL}
\int_0^{T_N}\!\!\int_\Omega |\nabla q_{\delta,L,N}|^2\dx\dt
\le \delta^{-1} C_{T,\delta,L}.
\end{equation}

The ODE system is finite-dimensional and its vector field is locally Lipschitz. The only possible obstruction to continuation would be blow-up of the coefficient vector. But \eqref{eq:energy_uniformN_deltaL} yields a uniform bound in \(H^1(\Omega)\); hence all coefficients remain bounded on every compact subinterval of \([0,T)\). Therefore no blow-up occurs, and the local solution extends globally, i.e. \(T_N=T\). \smallskip

\noindent\textit{Additional estimates.}  Since \(\rho_{\delta,L,N}(t)\in V_N\) and \(V_N\) is spanned by Neumann eigenfunctions, also
\(-\Delta \rho_{\delta,L,N}(t)\in V_N\). Thus we may test \eqref{eq:G2_deltaL_full} with
\(\psi=-\Delta\rho_{\delta,L,N}(t)\) and obtain
\[
(q_{\delta,L,N},-\Delta\rho_{\delta,L,N})
=
\varepsilon^{-1}(W_L'(\rho_{\delta,L,N}),-\Delta\rho_{\delta,L,N})
+\varepsilon(\nabla\rho_{\delta,L,N},\nabla(-\Delta\rho_{\delta,L,N})),
\]
which gives after integration by parts
\[
\varepsilon\|\Delta\rho_{\delta,L,N}\|_{L^2(\Omega)}^2
+\varepsilon^{-1}\int_\Omega W_L''(\rho_{\delta,L,N})|\nabla\rho_{\delta,L,N}|^2\dx
=
(\nabla q_{\delta,L,N},\nabla\rho_{\delta,L,N}).
\]
Using \(W_L''\ge -c_1\), we infer
\[
\varepsilon\|\Delta\rho_{\delta,L,N}\|_{L^2(\Omega)}^2
\le
\|\nabla q_{\delta,L,N}\|_{L^2(\Omega)}\|\nabla\rho_{\delta,L,N}\|_{L^2(\Omega)}
+\frac{c_1}{\varepsilon}\|\nabla\rho_{\delta,L,N}\|_{L^2(\Omega)}^2.
\]
Integrating in time over \((0,T)\), and using \eqref{eq:q_grad_uniformN_deltaL} together with
\eqref{eq:energy_uniformN_deltaL}, we conclude
\begin{equation}\label{eq:H2_uniformN_deltaL}
\|\Delta\rho_{\delta,L,N}\|_{L^2(0,T;L^2(\Omega))}\le C_{T,\delta,L}.
\end{equation}
By elliptic regularity for the Neumann Laplacian and mass conservation
(which controls the mean value), this implies
\[
\|\rho_{\delta,L,N}\|_{L^2(0,T;H^2(\Omega))}\le C_{T,\delta,L}.
\]

We now estimate \(\partial_t\rho_{\delta,L,N}\) in \(L^2(0,T;H^{-1}(\Omega))\).
Let \(\varphi\in H^1(\Omega)\). Since \(\partial_t\rho_{\delta,L,N}\in V_N\), we have
\[
\langle \partial_t\rho_{\delta,L,N},\varphi\rangle_{H^{-1},H^1}
=
(\partial_t\rho_{\delta,L,N},P_N\varphi).
\]
Testing \eqref{eq:G1_deltaL_full} with \(P_N\varphi\) gives
\[
\begin{aligned}
\bigl|\langle \partial_t\rho_{\delta,L,N},\varphi\rangle\bigr|
&\le
\Bigl|\int_\Omega m_{\delta,L}(\rho_{\delta,L,N})\nabla q_{\delta,L,N}\cdot\nabla P_N\varphi\dx\Bigr|
\\&\quad
+\kappa\Bigl|\int_\Omega m_{\delta,L}(\rho_{\delta,L,N})D(\nu)\nabla\rho_{\delta,L,N}\cdot\nabla P_N\varphi\dx\Bigr|.
\end{aligned}
\]
Using \(m_{\delta,L}\le L\), \(0\le D(\nu)\le \Lambda I\), and the \(H^1\)-stability of the spectral projection,
we deduce
\[
\bigl|\langle \partial_t\rho_{\delta,L,N},\varphi\rangle\bigr|
\le
CL\|\nabla q_{\delta,L,N}\|_{L^2(\Omega)}\|\nabla\varphi\|_{L^2(\Omega)}
+
C\kappa L\Lambda \|\nabla\rho_{\delta,L,N}\|_{L^2(\Omega)}\|\nabla\varphi\|_{L^2(\Omega)}.
\]
Therefore
\[
\|\partial_t\rho_{\delta,L,N}\|_{H^{-1}(\Omega)}
\le
C\|\nabla q_{\delta,L,N}\|_{L^2(\Omega)}
+
C\|\nabla\rho_{\delta,L,N}\|_{L^2(\Omega)}.
\]
Combining this with \eqref{eq:q_grad_uniformN_deltaL} and \eqref{eq:energy_uniformN_deltaL}, we obtain
\begin{equation}\label{eq:dt_uniformN_deltaL}
\|\partial_t\rho_{\delta,L,N}\|_{L^2(0,T;H^{-1}(\Omega))}\le C_{T,\delta,L}.
\end{equation}

It remains to obtain a uniform bound for \(q_{\delta,L,N}\) in \(L^2(0,T;H^1(\Omega))\).
Testing \eqref{eq:G2_deltaL_full} with the constant function \(\psi\equiv1\in V_N\) yields
\[
\int_\Omega q_{\delta,L,N}(t)\dx
=
\varepsilon^{-1}\int_\Omega W_L'(\rho_{\delta,L,N}(t))\dx
\qquad\text{for a.e. }t\in(0,T).
\]
Thus, writing \(\overline q_{\delta,L,N}(t):=|\Omega|^{-1}\int_\Omega q_{\delta,L,N}(t)\dx\), we have
\[
|\overline q_{\delta,L,N}(t)|
\le
C\|W_L'(\rho_{\delta,L,N}(t))\|_{L^1(\Omega)}
\le
C\|W_L'(\rho_{\delta,L,N}(t))\|_{L^2(\Omega)}.
\]
Using Lemma~\ref{lem:WL}(3) once more and the uniform \(L^\infty(0,T;L^{2(p-1)}(\Omega))\)-bound on
\(\rho_{\delta,L,N}\), we obtain
\[
\|W_L'(\rho_{\delta,L,N})\|_{L^2(\Omega_T)}\le C_{T,\delta,L},
\]
hence
\[
\|\overline q_{\delta,L,N}\|_{L^2(0,T)}\le C_{T,\delta,L}.
\]
By Poincar\'e's inequality,
\[
\|q_{\delta,L,N}(t)-\overline q_{\delta,L,N}(t)\|_{L^2(\Omega)}
\le C\|\nabla q_{\delta,L,N}(t)\|_{L^2(\Omega)}.
\]
Therefore, together with \eqref{eq:q_grad_uniformN_deltaL},
\begin{equation} \label{eq:q_uniformN_deltaL}
\|q_{\delta,L,N}\|_{L^2(0,T;H^1(\Omega))}
\le
C\Bigl(
\|\nabla q_{\delta,L,N}\|_{L^2(\Omega_T)}
+\|\overline q_{\delta,L,N}\|_{L^2(0,T)}
\Bigr)
\le C_{T,\delta,L}.
\end{equation}

\noindent\textit{Limit process.} The estimates \eqref{eq:energy_uniformN_deltaL}, \eqref{eq:H2_uniformN_deltaL}, \eqref{eq:dt_uniformN_deltaL}, and \eqref{eq:q_uniformN_deltaL}
show that, up to a subsequence,
\[
\begin{aligned}
\rho_{\delta,L,N}&\rightharpoonup^\ast \rho_{\delta,L}
&&\quad\text{in }L^\infty(0,T;H^1(\Omega)), \\
\rho_{\delta,L,N}&\rightharpoonup \rho_{\delta,L}
&&\quad\text{in }L^2(0,T;H^2(\Omega)), \\
\partial_t\rho_{\delta,L,N}&\rightharpoonup \partial_t\rho_{\delta,L}
&&\quad\text{in }L^2(0,T;H^{-1}(\Omega)), \\
q_{\delta,L,N}&\rightharpoonup q_{\delta,L}
&&\quad\text{in }L^2(0,T;H^1(\Omega)),
\end{aligned}
\]
Moreover,
the Aubin--Lions lemma \cite{simon1986compact} yields
\begin{equation}\label{eq:rho_strong_deltaL}
\rho_{\delta,L,N}\to \rho_{\delta,L}
\qquad\text{strongly in }L^2(0,T;H^1(\Omega)).
\end{equation}
Since \(m_{\delta,L}\) is globally Lipschitz, this implies
\[
m_{\delta,L}(\rho_{\delta,L,N})\to m_{\delta,L}(\rho_{\delta,L})
\qquad\text{strongly in }L^2(\Omega_T).
\]
We next show the strong convergence of \(W_L'(\rho_{\delta,L,N})\).
Because \(d\le3\) and \(2(p-1)\le 6\) if \(d=3\) (while \(H^1(\Omega)\hookrightarrow L^r(\Omega)\) for every finite \(r\) if \(d=2\)),
the embedding \(H^1(\Omega)\hookrightarrow L^{2(p-1)}(\Omega)\) is continuous.
Hence \eqref{eq:rho_strong_deltaL} yields
\[
\rho_{\delta,L,N}\to \rho_{\delta,L}
\qquad\text{strongly in }L^2\bigl(0,T;L^{2(p-1)}(\Omega)\bigr).
\]
Using Lemma~\ref{lem:WL}(3) and the mean-value theorem, we obtain for all \(a,b\in\mathbb R\),
\[
|W_L'(a)-W_L'(b)|
\le C\bigl(1+|a|^{p-2}+|b|^{p-2}\bigr)|a-b|,
\]
with \(C>0\) independent of \(N\). Therefore, by Hölder's inequality,
\[
\begin{aligned}
&\|W_L'(\rho_{\delta,L,N})-W_L'(\rho_{\delta,L})\|_{L^2(\Omega_T)}
\\
&\le
C\Bigl\|1+|\rho_{\delta,L,N}|^{p-2}+|\rho_{\delta,L}|^{p-2}\Bigr\|_{L^\infty\!(0,T;L^{\frac{2(p-1)}{p-2}}(\Omega))}
\|\rho_{\delta,L,N}-\rho_{\delta,L}\|_{L^2\!(0,T;L^{2(p-1)}(\Omega))}
\to0.
\end{aligned}
\]
Hence
\[
W_L'(\rho_{\delta,L,N})\to W_L'(\rho_{\delta,L})
\qquad\text{strongly in }L^2(\Omega_T).
\]

We may now pass to the limit in the Galerkin formulation.
The linear terms pass by weak convergence, and the nonlinear terms pass by the strong convergence of
\(m_{\delta,L}(\rho_{\delta,L,N})\) and \(W_L'(\rho_{\delta,L,N})\).
Thus \((\rho_{\delta,L},q_{\delta,L})\) satisfies the weak form of
\eqref{eq:regPDE_deltaL}--\eqref{eq:regq_deltaL}, with
\[
\rho_{\delta,L}\in H^1(0,T;H^{-1}(\Omega))\cap L^\infty(0,T;H^1(\Omega))\cap L^2(0,T;H^2(\Omega)),
\qquad
q_{\delta,L}\in L^2(0,T;H^1(\Omega)).
\]
Mass conservation follows by passing to the limit in \eqref{eq:mass_Galerkin_deltaL}, and the energy inequality
\eqref{eq:energy_deltaL} follows from \eqref{eq:energy_Galerkin_deltaL} by weak lower semicontinuity.
\end{proof}

\subsubsection*{Step 3: Entropy estimate at the \((\delta,L)\)-level and nonnegativity}

Define \(\Phi_{\delta,L}\in C^{1,1}(\mathbb R)\) by
\[
\Phi_{\delta,L}''(r)=\frac1{m_{\delta,L}(r)},
\qquad
\Phi_{\delta,L}(1)=\Phi_{\delta,L}'(1)=0.
\]
Since \(\delta\le m_{\delta,L}\le L\), we have \(0\le \Phi_{\delta,L}''\le \delta^{-1}\), hence \(\Phi_{\delta,L}'\) is globally Lipschitz. Therefore, by the Sobolev chain rule,
\[
\nabla\Phi_{\delta,L}'(\rho_{\delta,L})
=
\Phi_{\delta,L}''(\rho_{\delta,L})\nabla\rho_{\delta,L}
\in L^2(\Omega_T),
\]
because \(0\le \Phi_{\delta,L}''\le \delta^{-1}\) and \(\nabla\rho_{\delta,L}\in L^2(\Omega_T)\). Consequently,
\(
\Phi_{\delta,L}'(\rho_{\delta,L})\in L^2(0,T;H^1(\Omega)).
\)
Moreover, a standard chain-rule argument for convex/Lipschitz functionals along
trajectories in \(H^1(0,T;H^{-1}(\Omega))\cap L^2(0,T;H^1(\Omega))\) yields
(see, e.g., \cite{colli_visintin_1990} and, in a closely related Cahn--Hilliard
context, \cite[Lemma~4.1]{giorgini_grasselli_wu_2018})
\[
\ddt\int_\Omega \Phi_{\delta,L}(\rho_{\delta,L}(t)) \dx
=
\bigl\langle \partial_t\rho_{\delta,L}(t),\Phi_{\delta,L}'(\rho_{\delta,L}(t))\bigr\rangle_{H^{-1},H^1}
\]
in \(\mathcal D'(0,T)\). Thus, testing \eqref{eq:regPDE_deltaL} in weak form with
\(\varphi=\Phi_{\delta,L}'(\rho_{\delta,L})\) and using
\(m_{\delta,L}\Phi_{\delta,L}''\equiv 1\) yields
\begin{equation}\label{eq:entropy_id_deltaL}
\ddt\int_\Omega \Phi_{\delta,L}(\rho_{\delta,L}) \dx
+\int_\Omega \nabla q_{\delta,L}\cdot\nabla\rho_{\delta,L} \dx
+\kappa\int_\Omega \nabla\rho_{\delta,L}\cdot D(\nu)\nabla\rho_{\delta,L} \dx
=0.
\end{equation}
Using \(q_{\delta,L}=\varepsilon^{-1}W_L'(\rho_{\delta,L})-\varepsilon\Delta\rho_{\delta,L}\) and integrating by parts gives
\[
\begin{aligned}
\int_\Omega \nabla q_{\delta,L}\cdot\nabla\rho_{\delta,L} \dx
&=
\varepsilon\int_\Omega |\Delta\rho_{\delta,L}|^2 \dx
+\varepsilon^{-1}\int_\Omega W_L''(\rho_{\delta,L})|\nabla\rho_{\delta,L}|^2 \dx \\
&\ge
\varepsilon\int_\Omega |\Delta\rho_{\delta,L}|^2 \dx
-\frac{c_1}{\varepsilon}\int_\Omega |\nabla\rho_{\delta,L}|^2 \dx.
\end{aligned}
\]
Hence,
\begin{equation}\label{eq:entropy_deltaL}
\begin{aligned}
&\ddt\int_\Omega \Phi_{\delta,L}(\rho_{\delta,L}) \dx
+\varepsilon\int_\Omega |\Delta\rho_{\delta,L}|^2 \dx
+\kappa\int_\Omega \nabla\rho_{\delta,L}\cdot D(\nu)\nabla\rho_{\delta,L} \dx
\\ &\le
\frac{c_1}{\varepsilon}\int_\Omega |\nabla\rho_{\delta,L}|^2 \dx.
\end{aligned}
\end{equation}
We now extract from \eqref{eq:entropy_deltaL} a bound on the negative part of \(\rho_{\delta,L}\).
Since \(\Phi_{\delta,L}''\ge 0\) and
\(\Phi_{\delta,L}'(1)=0\), the function
\(\Phi_{\delta,L}'\) is non-decreasing. Hence
\[
\Phi_{\delta,L}'(0)\le \Phi_{\delta,L}'(1)=0.
\]
Moreover, for all \(r\le 0\) one has
\(\Phi_{\delta,L}''(r)=\delta^{-1}\). Therefore Taylor's formula at \(r=0\) gives
\[
\Phi_{\delta,L}(r)
=
\Phi_{\delta,L}(0)+\Phi_{\delta,L}'(0)r+\frac{1}{2\delta}r^2
\qquad (r\le0).
\]
Since \(r\le0\) and \(\Phi_{\delta,L}'(0)\le0\), the linear term satisfies
\[
\Phi_{\delta,L}'(0)r\ge0.
\]
Furthermore, \(\Phi_{\delta,L}\) is convex and normalized by
\(\Phi_{\delta,L}(1)=0\), \(\Phi_{\delta,L}'(1)=0\). Thus \(1\) is a global minimizer and
\(\Phi_{\delta,L}\ge0\) on \(\mathbb R\). Consequently,
\[
\Phi_{\delta,L}(r)
\ge \frac{1}{2\delta}r^2
\qquad (r\le0).
\]
For \(r\ge0\), the estimate \(r_-^2\le 2\delta\Phi_{\delta,L}(r)\) is trivial because
\(r_-=0\) and \(\Phi_{\delta,L}\ge0\). Hence
\[
r_-^2\le 2\delta \Phi_{\delta,L}(r)
\qquad\forall r\in\mathbb R.
\]
and therefore
\[
\|\bigl(\rho_{\delta,L}(t)\bigr)_-\|_{L^2(\Omega)}^2
\le 2\delta\int_\Omega \Phi_{\delta,L}(\rho_{\delta,L}(t)) \dx.
\]
In particular, along any sequence \(\delta\downarrow 0\) for fixed \(L\), the negative part
converges to \(0\) in \(L^\infty(0,T;L^2(\Omega))\), hence any limit \(\rho_L\) satisfies
\(\rho_L\ge 0\) a.e.

\subsubsection*{Step 4: Passage \(\delta\downarrow 0\) for fixed \(L>1\)}

\begin{lemma}[Uniform time-derivative bound for fixed \(L\)]\label{lem:dt_fixedL_uniform_delta}
There exists a constant \(C_L>0\), independent of \(\delta\in(0,1)\), such that
\[
\|\partial_t\rho_{\delta,L}\|_{L^2(0,T;H^{-1}(\Omega))}\le C_L.
\]
\end{lemma}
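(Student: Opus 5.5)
We bound the time derivative by duality from the weak form of \eqref{eq:regPDE_deltaL}, writing the mobility in a way that sees the energy dissipation rather than $\delta^{-1}$. For $\varphi\in H^1(\Omega)$ and a.e.\ $t$,
\[
|\langle\partial_t\rho_{\delta,L},\varphi\rangle|
\le \int_\Omega m_{\delta,L}(\rho_{\delta,L})|\nabla q_{\delta,L}||\nabla\varphi|\dx
+\kappa\Lambda\int_\Omega m_{\delta,L}(\rho_{\delta,L})|\nabla\rho_{\delta,L}||\nabla\varphi|\dx .
\]
In the first term we split $m_{\delta,L}=m_{\delta,L}^{1/2}m_{\delta,L}^{1/2}$ and use $m_{\delta,L}\le L$:
\[
\int_\Omega m_{\delta,L}|\nabla q_{\delta,L}||\nabla\varphi|\dx
\le L^{1/2}\Bigl(\int_\Omega m_{\delta,L}|\nabla q_{\delta,L}|^2\dx\Bigr)^{1/2}\|\nabla\varphi\|_{L^2(\Omega)} .
\]
The second term is bounded by $\kappa\Lambda L\|\nabla\rho_{\delta,L}\|_{L^2(\Omega)}\|\nabla\varphi\|_{L^2(\Omega)}$. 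Squaring and integrating in time then gives
\[
\|\partial_t\rho_{\delta,L}\|^2_{L^2(0,T;H^{-1})}\le 2L\int_0^T\!\!\int_\Omega m_{\delta,L}(\rho_{\delta,L})|\nabla q_{\delta,L}|^2\dx\dt+2\kappa^2\Lambda^2L^2\|\nabla\rho_{\delta,L}\|^2_{L^2(\Omega_T)}.
\]

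It therefore suffices to bound both quantities independently of $\delta$, and this is where the main care is needed. The earlier Galerkin bound \eqref{eq:diss_uniformN_deltaL} was labelled $C_{T,\delta,L}$, while \eqref{eq:energy_uniformN_deltaL} was already stated with $C_{T,L}$ independent of $\delta$. We rederive both from \eqref{eq:energy_deltaL}. Its right-hand side satisfies $C(\Lambda)\kappa^2\int m_{\delta,L}|\nabla\rho_{\delta,L}|^2\le C(\Lambda)\kappa^2L\|\nabla\rho_{\delta,L}\|_{L^2}^2$.

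By Lemma~\ref{lem:WL}(4), $E_L(\rho)\ge \frac\varepsilon2\|\nabla\rho\|^2_{L^2}-\varepsilon^{-1}c_3|\Omega|$, with constants independent of $\delta$ and $L$. Hence $1+E_L+\varepsilon^{-1}c_3|\Omega|$ satisfies an integral Grönwall inequality with rate $C_L$ depending only on $L,\kappa,\Lambda,\varepsilon$. The energy inequality holds only for a.e.\ $t$, but it is an integral inequality, so the integral form of Grönwall applies.

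Since $E_L(\rho_0)$ is finite and independent of $\delta$ (by the growth bound of Lemma~\ref{lem:WL}(3) and $\rho_0\in H^1\hookrightarrow L^p$), this yields $\sup_t\|\nabla\rho_{\delta,L}(t)\|_{L^2}^2\le C_L$. Reinserting into \eqref{eq:energy_deltaL} bounds $\int_0^T\!\int_\Omega m_{\delta,L}|\nabla q_{\delta,L}|^2$ by $C_L$. Combining these with the duality estimate gives the claim with $C_L$ independent of $\delta\in(0,1)$.

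The only real obstacle is to avoid the $\delta^{-1}$-dependent bound on $\|\nabla q_{\delta,L}\|_{L^2}$ from \eqref{eq:q_grad_uniformN_deltaL}. The weighted splitting above does this by estimating the flux $m_{\delta,L}\nabla q_{\delta,L}$ directly in $L^2$ through the dissipation.
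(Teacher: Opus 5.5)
Your proof is correct and follows essentially the same route as the paper. The paper bounds the total flux $J_{\delta,L}$ in $L^2(\Omega_T)$, using $m_{\delta,L}^2\le L\,m_{\delta,L}$ together with the energy inequality, $m_{\delta,L}\le L$ and $0\le D(\nu)\le\Lambda I$, and then concludes via $\|\partial_t\rho_{\delta,L}\|_{H^{-1}(\Omega)}\le\|J_{\delta,L}\|_{L^2(\Omega)}$, which is exactly your weighted splitting; your explicit Grönwall argument for the $\delta$-independence of the energy and dissipation bounds just spells out what the paper takes from Lemma~\ref{lem:exist_deltaL}, where the Galerkin energy bound is noted to be independent of $\delta$.
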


\begin{proof}
Define the total flux
\[
J_{\delta,L}:=
m_{\delta,L}(\rho_{\delta,L})\nabla q_{\delta,L}
+\kappa m_{\delta,L}(\rho_{\delta,L})D(\nu)\nabla\rho_{\delta,L}.
\]
Then \eqref{eq:regPDE_deltaL} reads
\(
\partial_t\rho_{\delta,L}=\Div J_{\delta,L}\) in \(L^2(0,T;H^{-1}(\Omega)).
\)
By \eqref{eq:energy_deltaL} and the bound \(m_{\delta,L}\le L\),
\[
\begin{aligned}
\|m_{\delta,L}(\rho_{\delta,L})\nabla q_{\delta,L}\|_{L^2(\Omega_T)}^2
&=
\int_{\Omega_T} m_{\delta,L}(\rho_{\delta,L})^2 |\nabla q_{\delta,L}|^2 \dx\dt
\\
&\le
L\int_{\Omega_T} m_{\delta,L}(\rho_{\delta,L}) |\nabla q_{\delta,L}|^2 \dx\dt
\le C_L .
\end{aligned}
\]
Moreover, using \(m_{\delta,L}\le L\), \(0\le D(\nu)\le \Lambda I\), and the
\(L^\infty(0,T;H^1(\Omega))\)-bound from Lemma~\ref{lem:exist_deltaL},
\[
\|m_{\delta,L}(\rho_{\delta,L})D(\nu)\nabla\rho_{\delta,L}\|_{L^2(\Omega_T)}^2
\le
\Lambda^2L^2\|\nabla\rho_{\delta,L}\|_{L^2(\Omega_T)}^2
\le C_L .
\]
Hence \(\|J_{\delta,L}\|_{L^2(\Omega_T)}\le C_L\).
Now let \(\varphi\in H^1(\Omega)\). Then
\[
\bigl|\langle \partial_t\rho_{\delta,L},\varphi\rangle_{H^{-1},H^1}\bigr|
=
\bigl|\langle \Div J_{\delta,L},\varphi\rangle_{H^{-1},H^1}\bigr|
=
\left|\int_\Omega J_{\delta,L}\cdot\nabla\varphi\,\dx\right|
\le
\|J_{\delta,L}\|_{L^2(\Omega)}\|\nabla\varphi\|_{L^2(\Omega)}.
\]
Therefore
\[
\|\partial_t\rho_{\delta,L}\|_{H^{-1}(\Omega)}
\le
\|J_{\delta,L}\|_{L^2(\Omega)}.
\]
Squaring and integrating in time yields the claim.
\end{proof}

Thus, by Lemma~\ref{lem:exist_deltaL}, \eqref{eq:entropy_deltaL}, and
Lemma~\ref{lem:dt_fixedL_uniform_delta}, the family \((\rho_{\delta,L})_{\delta\in(0,1)}\)
is bounded uniformly in \(\delta\) in
\[
L^\infty(0,T;H^1(\Omega))
\cap H^1(0,T;H^{-1}(\Omega))
\cap L^2(0,T;H^2(\Omega)).
\]
Hence, along a subsequence \(\delta\downarrow0\),
\[
\begin{aligned}
\rho_{\delta,L}&\rightharpoonup^\ast \rho_L  &&\text{in }L^\infty(0,T;H^1(\Omega)), \\
\rho_{\delta,L}&\rightharpoonup \rho_L  &&\text{in }L^2(0,T;H^2(\Omega)), \\
\partial_t\rho_{\delta,L}&\rightharpoonup \partial_t\rho_L  &&\text{in }L^2(0,T;H^{-1}(\Omega)).
\end{aligned}
\]
By the Aubin--Lions compactness lemma \cite{simon1986compact},
\[
\rho_{\delta,L}\to \rho_L
\qquad\text{strongly in }L^2(0,T;H^1(\Omega))\cap C([0,T];L^2(\Omega)).
\]
Thus,
\[
m_{\delta,L}(\rho_{\delta,L})\to m_L(\rho_L):=\min\{\rho_L,L\}
\qquad\text{strongly in }L^2(\Omega_T),
\]
and \(\rho_L\ge0\) a.e.
Since \(L\) is fixed, the growth bound in Lemma~\ref{lem:WL}(3), together with
\(\rho_{\delta,L}\in L^\infty(0,T;H^1(\Omega))\), implies
\[
\|W_L'(\rho_{\delta,L})\|_{L^2(\Omega_T)}\le C_L.
\]
Since also \(\Delta\rho_{\delta,L}\) is bounded in \(L^2(\Omega_T)\) by \eqref{eq:entropy_deltaL}, the identity
\[
q_{\delta,L}=\varepsilon^{-1}W_L'(\rho_{\delta,L})-\varepsilon\Delta\rho_{\delta,L}
\]
yields
\[
\|q_{\delta,L}\|_{L^2(\Omega_T)}\le C_L.
\]
Hence, after extraction,
\[
q_{\delta,L}\rightharpoonup q_L
\qquad\text{weakly in }L^2(\Omega_T).
\]
Moreover, the strong convergence of \(\rho_{\delta,L}\) in \(L^2(0,T;H^1(\Omega))\) implies,
exactly as in the Galerkin limit of Step~2, that
\[
W_L'(\rho_{\delta,L})\to W_L'(\rho_L)
\qquad\text{strongly in }L^2(\Omega_T).
\]
Therefore, passing to the limit in
\[
q_{\delta,L}=\varepsilon^{-1}W_L'(\rho_{\delta,L})-\varepsilon\Delta\rho_{\delta,L}
\]
gives
\begin{equation}\label{eq:qL_weak_step4}
q_L=\varepsilon^{-1}W_L'(\rho_L)-\varepsilon\Delta\rho_L
\qquad\text{in }L^2(\Omega_T),
\end{equation}
equivalently, for a.e.\ \(t\in(0,T)\) and all \(\psi\in H^1(\Omega)\),
\[
\int_\Omega q_L(t)\psi\dx
=
\varepsilon^{-1}\int_\Omega W_L'(\rho_L(t))\psi\dx
+\varepsilon\int_\Omega \nabla\rho_L(t)\cdot\nabla\psi\dx.
\]

Define
\[
J_{\delta,L}^{\mathrm{diff}}:=m_{\delta,L}(\rho_{\delta,L})\nabla q_{\delta,L},
\qquad
J_{\delta,L}:=J_{\delta,L}^{\mathrm{diff}}+\kappa m_{\delta,L}(\rho_{\delta,L})D(\nu)\nabla\rho_{\delta,L}.
\]
From \eqref{eq:energy_deltaL} and \(m_{\delta,L}(\rho_{\delta,L})\le L\), we obtain
\[
\|J_{\delta,L}^{\mathrm{diff}}\|_{L^2(\Omega_T)}^2
\le
L\int_{\Omega_T} m_{\delta,L}(\rho_{\delta,L})|\nabla q_{\delta,L}|^2\dx\dt
\le C_L.
\]
Hence, after extraction,
\[
J_{\delta,L}^{\mathrm{diff}}\rightharpoonup J_L^{\mathrm{diff}}
\qquad\text{weakly in }L^2(\Omega_T;\mathbb{R}^d).
\]
Since \(m_{\delta,L}(\rho_{\delta,L})\to m_L(\rho_L)\) strongly in \(L^2(\Omega_T)\) and
\(\nabla\rho_{\delta,L}\to \nabla\rho_L\) strongly in \(L^2(\Omega_T)\), we also have
\[
m_{\delta,L}(\rho_{\delta,L})D(\nu)\nabla\rho_{\delta,L}
\to
m_L(\rho_L)D(\nu)\nabla\rho_L
\qquad\text{strongly in }L^1(\Omega_T;\mathbb{R}^d).
\]
We may therefore define
\[
J_L:=J_L^{\mathrm{diff}}+\kappa m_L(\rho_L)D(\nu)\nabla\rho_L
\in L^1(\Omega_T;\mathbb{R}^d).
\]

\begin{lemma}[Strong \(H^1\)-convergence of the truncated mobility for fixed \(L\)]
\label{lem:m_deltaL_to_mL_H1}
One has
\[
m_{\delta,L}(\rho_{\delta,L})\to m_L(\rho_L)
\qquad\text{strongly in }L^2(0,T;H^1(\Omega)).
\]
\end{lemma}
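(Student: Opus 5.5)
We already know from Step 4 that \(m_{\delta,L}(\rho_{\delta,L})\to m_L(\rho_L)\) strongly in \(L^2(\Omega_T)\). It therefore remains to prove that the gradients converge strongly in \(L^2(\Omega_T)\).

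Since \(m_{\delta,L}\) is Lipschitz (with constant one) and piecewise linear, the Sobolev chain rule gives
\[
\nabla m_{\delta,L}(\rho_{\delta,L})=\chi_{\{\delta<\rho_{\delta,L}<L\}}\nabla\rho_{\delta,L},
\qquad
\nabla m_L(\rho_L)=\chi_{\{0<\rho_L<L\}}\nabla\rho_L .
\]
Here we use the Stampacchia property that \(\nabla\rho=0\) a.e.\ on level sets \(\{\rho=c\}\). This property lets us ignore the boundary values of the indicator. We split the difference as
\[
\nabla m_{\delta,L}(\rho_{\delta,L})-\nabla m_L(\rho_L)
=\chi_{\{\delta<\rho_{\delta,L}<L\}}\bigl(\nabla\rho_{\delta,L}-\nabla\rho_L\bigr)
+\bigl(\chi_{\{\delta<\rho_{\delta,L}<L\}}-\chi_{\{0<\rho_L<L\}}\bigr)\nabla\rho_L .
\]
The first term tends to zero in \(L^2(\Omega_T)\), because \(\nabla\rho_{\delta,L}\to\nabla\rho_L\) strongly by Aubin--Lions (Step 4) and the indicator is bounded by one.

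For the second term we use dominated convergence, with dominating function \(|\nabla\rho_L|^2\in L^1(\Omega_T)\). It therefore suffices to show that the indicators converge a.e.\ on the set \(\{\nabla\rho_L\neq0\}\). After extracting a further subsequence, \(\rho_{\delta,L}\to\rho_L\) a.e.\ in \(\Omega_T\). We then distinguish three cases.
\begin{enumerate}
\item At points with \(0<\rho_L<L\), we eventually have \(\delta<\rho_{\delta,L}<L\), so both indicators equal one.
\item At points with \(\rho_L>L\) or \(\rho_L<0\), both indicators eventually vanish. The case \(\rho_L<0\) is in fact a null set, since \(\rho_L\ge0\).
\item The remaining points lie in \(\{\rho_L=0\}\cup\{\rho_L=L\}\). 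On this set \(\nabla\rho_L=0\) a.e., so the second term vanishes there regardless of how the indicators behave.
\end{enumerate}
Hence the integrand \(\bigl|\chi_{\{\delta<\rho_{\delta,L}<L\}}-\chi_{\{0<\rho_L<L\}}\bigr|^2|\nabla\rho_L|^2\) tends to zero a.e., and the second term tends to zero in \(L^2(\Omega_T)\). This convergence holds along a subsequence. Since every subsequence has a further subsequence with the same limit \(m_L(\rho_L)\), the whole family converges.

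The main obstacle is the degenerate level set \(\{\rho_L=0\}\). There the threshold \(\delta\) in \(m_{\delta,L}\) moves with \(\delta\), and \(\rho_{\delta,L}\) may oscillate around \(\delta\), so the indicators \(\chi_{\{\delta<\rho_{\delta,L}\}}\) need not converge. The key point is to avoid any convergence claim on this set. We do this by keeping the gradient difference in the first term, which is controlled by strong convergence, and by applying the level-set property to \(\rho_L\) only, never to \(\rho_{\delta,L}\). We must also check that the chain-rule formula for \(m_L(\rho_L)\) is valid even though \(\rho_L\) attains the values \(0\) and \(L\); the Stampacchia property again settles this.
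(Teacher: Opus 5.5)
Your proposal is correct and follows essentially the same route as the paper: the same chain-rule formulas for $\nabla m_{\delta,L}(\rho_{\delta,L})$ and $\nabla m_L(\rho_L)$, and the same splitting of the gradient difference into a term controlled by $\nabla\rho_{\delta,L}\to\nabla\rho_L$ in $L^2(\Omega_T)$ and an indicator-difference term handled by dominated convergence using $\nabla\rho_L=0$ a.e.\ on $\{\rho_L=0\}\cup\{\rho_L=L\}$. The differences are cosmetic: the paper re-derives the $L^2(\Omega_T)$ part via $|m_{\delta,L}-m_L|\le\delta$ and the $1$-Lipschitz property of $m_L$ instead of citing Step~4, whereas you spell out the a.e.-convergent subsequence extraction and the subsequence-of-subsequence argument that the paper leaves implicit.
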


\begin{proof}
We first consider the \(L^2(\Omega_T)\)-part.
Since \(m_L(r)=\min\{r_+,L\}\) and \(m_{\delta,L}(r)=\min\{\max\{\delta,r\},L\}\), we have
\[
|m_{\delta,L}(r)-m_L(r)|\le \delta
\qquad\forall r\in\mathbb R.
\]
Therefore
\[
\begin{aligned}
\|m_{\delta,L}(\rho_{\delta,L})-m_L(\rho_L)\|_{L^2(\Omega_T)}
&\le
\|m_{\delta,L}(\rho_{\delta,L})-m_L(\rho_{\delta,L})\|_{L^2(\Omega_T)}
\\&\quad
+\|m_L(\rho_{\delta,L})-m_L(\rho_L)\|_{L^2(\Omega_T)}
\\
&\le
\delta |\Omega_T|^{1/2}+\|\rho_{\delta,L}-\rho_L\|_{L^2(\Omega_T)}
\to0,
\end{aligned}
\]
because \(m_L\) is \(1\)-Lipschitz.
For the gradients, the Sobolev chain rule gives
\[
\nabla m_{\delta,L}(\rho_{\delta,L})
=
\mathbf 1_{\{\delta<\rho_{\delta,L}<L\}}\nabla\rho_{\delta,L}
\qquad\text{a.e. in }\Omega_T,
\]
and
\[
\nabla m_L(\rho_L)
=
\mathbf 1_{\{0<\rho_L<L\}}\nabla\rho_L
\qquad\text{a.e. in }\Omega_T.
\]
Hence
\[
\begin{aligned}
&\|\nabla m_{\delta,L}(\rho_{\delta,L})-\nabla m_L(\rho_L)\|_{L^2(\Omega_T)}
\\
&\le
\|\mathbf 1_{\{\delta<\rho_{\delta,L}<L\}}(\nabla\rho_{\delta,L}-\nabla\rho_L)\|_{L^2(\Omega_T)}
\\&\quad
+\|(\mathbf 1_{\{\delta<\rho_{\delta,L}<L\}}-\mathbf 1_{\{0<\rho_L<L\}})\nabla\rho_L\|_{L^2(\Omega_T)}.
\end{aligned}
\]
The first term tends to zero by the strong convergence
\(\nabla\rho_{\delta,L}\to\nabla\rho_L\) in \(L^2(\Omega_T)\).
For the second term, note that
\[
\mathbf 1_{\{\delta<\rho_{\delta,L}<L\}}\to \mathbf 1_{\{0<\rho_L<L\}}
\qquad\text{a.e. on }\{\rho_L\notin\{0,L\}\}.
\]
Moreover, by the truncation/composition properties of Sobolev functions (see, e.g.,
\cite[Sec.~2.1]{ziemer1989weakly}), one has
\[
\nabla (\rho_L)_+ = \mathbf 1_{\{\rho_L>0\}}\nabla \rho_L,
\qquad
\nabla (\rho_L-L)_+ = \mathbf 1_{\{\rho_L>L\}}\nabla \rho_L
\quad\text{a.e. in }\Omega_T.
\]
In particular,
\[
\nabla\rho_L=0
\qquad\text{a.e. on }\{\rho_L=0\}\cup\{\rho_L=L\}.
\]
Therefore
\[
(\mathbf 1_{\{\delta<\rho_{\delta,L}<L\}}-\mathbf 1_{\{0<\rho_L<L\}})\nabla\rho_L
\to0
\qquad\text{a.e. in }\Omega_T.
\]
Since the absolute value is bounded by \(2|\nabla\rho_L|\in L^2(\Omega_T)\), dominated convergence yields
\[
\|(\mathbf 1_{\{\delta<\rho_{\delta,L}<L\}}-\mathbf 1_{\{0<\rho_L<L\}})\nabla\rho_L\|_{L^2(\Omega_T)}
\to0.
\]
This proves the claim.
\end{proof}

We next identify the diffusive flux at the \(L\)-level.
Let \(\zeta\in W^{1,\infty}(\Omega;\mathbb{R}^d)\) satisfy \(\zeta\cdot n=0\) on \(\partial\Omega\).
For every \(\delta\in(0,1)\), since
\(J_{\delta,L}^{\mathrm{diff}}=m_{\delta,L}(\rho_{\delta,L})\nabla q_{\delta,L}\),
integration by parts yields
\begin{equation}\label{eq:flux_id_deltaL}
\int_0^T\!\!\int_\Omega J_{\delta,L}^{\mathrm{diff}}\cdot \zeta\dx\dt
=
-\int_0^T\!\!\int_\Omega q_{\delta,L}\,\Div\!\bigl(m_{\delta,L}(\rho_{\delta,L})\zeta\bigr)\dx\dt.
\end{equation}
The boundary term vanishes because \(\zeta\cdot n=0\) on \(\partial\Omega\).
By Lemma~\ref{lem:m_deltaL_to_mL_H1},
\[
m_{\delta,L}(\rho_{\delta,L})\to m_L(\rho_L)
\qquad\text{strongly in }L^2(0,T;H^1(\Omega)),
\]
and therefore
\[
\Div\!\bigl(m_{\delta,L}(\rho_{\delta,L})\zeta\bigr)
\to
\Div\!\bigl(m_L(\rho_L)\zeta\bigr)
\qquad\text{strongly in }L^2(\Omega_T).
\]
Using also
\[
q_{\delta,L}\rightharpoonup q_L
\qquad\text{weakly in }L^2(\Omega_T),
\qquad
J_{\delta,L}^{\mathrm{diff}}\rightharpoonup J_L^{\mathrm{diff}}
\qquad\text{weakly in }L^2(\Omega_T;\mathbb{R}^d),
\]
we may pass to the limit in \eqref{eq:flux_id_deltaL} and obtain
\begin{equation}\label{eq:flux_id_L}
\int_0^T\!\!\int_\Omega J_L^{\mathrm{diff}}\cdot \zeta\dx\dt
=
-\int_0^T\!\!\int_\Omega q_L\,\Div\!\bigl(m_L(\rho_L)\zeta\bigr)\dx\dt.
\end{equation}

Passing to the limit in the weak formulation of \eqref{eq:regPDE_deltaL}, we obtain that for all
\(\varphi\in L^2(0,T;H^1(\Omega))\),
\begin{equation}\label{eq:weak_mass_L}
\int_0^T \langle \partial_t\rho_L,\varphi\rangle\dt
+\int_0^T\!\!\int_\Omega J_L\cdot\nabla\varphi\dx\dt
=0.
\end{equation}
Moreover, since
\[
\rho_{\delta,L}\to \rho_L
\qquad\text{strongly in } C([0,T];L^2(\Omega))
\]
and \(\rho_{\delta,L}(0)=\rho_0\) for every \(\delta\), we infer
\[
\rho_L(0)=\rho_0.
\]

To recover mass conservation, choose test functions of the form
\(\varphi(x,t)=\chi(t)\) with \(\chi\in C_c^\infty(0,T)\) in \eqref{eq:weak_mass_L}. Since
\(\nabla\varphi=0\), we obtain
\[
\int_0^T \left(\int_\Omega \rho_L(t)\dx\right)\chi'(t)\dt=0
\qquad\forall \chi\in C_c^\infty(0,T).
\]
Hence the map
\[
M_L(t):=\int_\Omega \rho_L(t)\dx
\]
is constant in \(\mathcal D'(0,T)\). Because \(\rho_L\in C([0,T];L^2(\Omega))\), the function
\(M_L\) is continuous on \([0,T]\), and therefore constant for all \(t\in[0,T]\). Using
\(\rho_L(0)=\rho_0\), we conclude that
\[
\int_\Omega \rho_L(t)\dx=\int_\Omega \rho_0\dx
\qquad\forall t\in[0,T].
\]

Set
\[
K_{\delta,L}:=\sqrt{m_{\delta,L}(\rho_{\delta,L})}\,\nabla q_{\delta,L}.
\]
Then \((K_{\delta,L})\) is bounded in \(L^2(\Omega_T;\mathbb{R}^d)\) by \eqref{eq:energy_deltaL}.
Since
\[
\sqrt{m_{\delta,L}(\rho_{\delta,L})}\to \sqrt{m_L(\rho_L)}
\qquad\text{strongly in }L^2(\Omega_T),
\]
after extraction we may write
\[
K_{\delta,L}\rightharpoonup K_L
\qquad\text{weakly in }L^2(\Omega_T;\mathbb{R}^d),
\]
and necessarily
\[
J_L^{\mathrm{diff}}=\sqrt{m_L(\rho_L)}\,K_L
\qquad\text{a.e. in }\Omega_T.
\]
Consequently,
\begin{equation}\label{eq:weightedJL_step4}
\int_{\Omega_T}\frac{|J_L^{\mathrm{diff}}|^2}{m_L(\rho_L)}\dx\dt
\le
\int_{\Omega_T}|K_L|^2\dx\dt
\le
\liminf_{\delta\downarrow0}\int_{\Omega_T}m_{\delta,L}(\rho_{\delta,L})|\nabla q_{\delta,L}|^2\dx\dt,
\end{equation}
where, by convention,
\[
\frac{|J_L^{\mathrm{diff}}|^2}{m_L(\rho_L)}:=0 \qquad \text{on }\{m_L(\rho_L)=0\}.
\]
The first inequality follows from \(J_L^{\mathrm{diff}}=\sqrt{m_L(\rho_L)}\,K_L\), which gives
\(|J_L^{\mathrm{diff}}|^2/m_L(\rho_L)=|K_L|^2\) on \(\{m_L(\rho_L)>0\}\) and \(0\) otherwise, so that the
weighted integral coincides with \(\int_{\{m_L(\rho_L)>0\}}|K_L|^2\le\int_{\Omega_T}|K_L|^2\).
Using also lower semicontinuity of the energy and the strong convergence
\[
m_{\delta,L}(\rho_{\delta,L})|\nabla\rho_{\delta,L}|^2
\to
m_L(\rho_L)|\nabla\rho_L|^2
\qquad\text{strongly in }L^1(\Omega_T),
\]
we may pass to the limit in \eqref{eq:energy_deltaL} and obtain
\begin{equation}\label{eq:energy_L}
\begin{aligned}
&E_L(\rho_L(t))
+\frac12\int_0^t\!\!\int_\Omega \frac{|J_L^{\mathrm{diff}}|^2}{m_L(\rho_L)}\dx\ds
\\
&\le
E_L(\rho_0)
+C(\Lambda)\kappa^2\int_0^t\!\!\int_\Omega m_L(\rho_L)|\nabla\rho_L|^2\dx\ds
\qquad\text{for a.e. }t\in(0,T).
\end{aligned}
\end{equation}

\subsubsection*{Step 5: Uniform (in \(L\)) entropy estimate and \(L^2(0,T;H^2)\)}

For \(L>1\), define the entropy density \(\Phi_L:[0,\infty)\to\mathbb{R}\) by
\begin{equation}\label{eq:PhiL_def}
\Phi_L(s):=
\begin{cases}
s\log s-s+1, & 0\le s\le L,\\[0.3em]
L\log L-L+1+\log(L)(s-L)+\dfrac{(s-L)^2}{2L}, & s\ge L.
\end{cases}
\end{equation}
Then \(\Phi_L\in C^1([0,\infty))\cap W^{2,\infty}_{\mathrm{loc}}([0,\infty))\), and
\[
\Phi_L''(s)=\frac1{m_L(s)} \qquad\text{for a.e. }s>0.
\]
Moreover, there exists \(C>0\), independent of \(L\), such that
\begin{equation}\label{eq:PhiL_quad_bound}
0\le \Phi_L(s)\le C(1+s^2)\qquad\forall s\ge0.
\end{equation}
Indeed, on \(0\le s\le L\), this follows uniformly in \(L\) from the elementary estimate
\(s\log s-s+1\le C(1+s^2)\): the function is bounded on \([0,1]\), while
\(s\log s\le s^2\) for \(s\ge1\). If \(s\ge L\), then
\[
\Phi_L(s)
=
L\log L-L+1+\log(L)(s-L)+\frac{(s-L)^2}{2L}.
\]
Using \(L\le s\) and \(\log L\le L\) for \(L>1\), we obtain
\[
L\log L-L+1\le L^2\le s^2,
\qquad
\log(L)(s-L)\le Ls\le s^2,
\]
and
\[
\frac{(s-L)^2}{2L}\le \frac{s^2}{2L}\le \frac{s^2}{2}.
\]
This proves the upper bound in \eqref{eq:PhiL_quad_bound}. The nonnegativity follows from convexity and the normalization
\(\Phi_L(1)=0\), \(\Phi_L'(1)=0\).

We emphasize that, at the degenerate \(L\)-level, we do \emph{not} test
the degenerate mass balance \eqref{eq:weak_mass_L} with \(\Phi_L'(\rho_L)\), since \(\Phi_L'\) is singular at \(0\).
Instead, we obtain the \(L\)-level entropy inequality by passing to the limit
\(\delta\downarrow0\) in \eqref{eq:entropy_deltaL}.
More precisely, for fixed \(L>1\), the functions \(\Phi_{\delta,L}\) converge locally uniformly on \([0,\infty)\) to \(\Phi_L\), and by Step~4 we have
\[
\rho_{\delta,L}\to \rho_L \qquad\text{strongly in }L^2(0,T;H^1(\Omega))\cap C([0,T];L^2(\Omega)).
\]
Moreover, the estimate on the negative part obtained in Step~3 yields
\[
\|(\rho_{\delta,L})_-\|_{L^\infty(0,T;L^2(\Omega))}\to0
\qquad\text{as }\delta\downarrow0.
\]
Since \(\rho_L\ge0\) a.e., it is enough to prove the pointwise lower-bound property
\begin{equation}\label{eq:Phi_deltaL_pointwise_liminf}
r_\delta\to r\ge0,\ \delta\downarrow0
\qquad\Longrightarrow\qquad
\liminf_{\delta\downarrow0}\Phi_{\delta,L}(r_\delta)\ge \Phi_L(r).
\end{equation}
Indeed, let \(r_\delta\to r\ge0\).
If \(r>0\), then for \(\delta\) small enough one has \(r_\delta\in [r/2,3r/2]\subset(0,\infty)\), and since
\(\Phi_{\delta,L}\to\Phi_L\) locally uniformly on \((0,\infty)\), it follows that
\[
\Phi_{\delta,L}(r_\delta)\to \Phi_L(r).
\]
If \(r=0\) and \(r_\delta\ge0\) along a subsequence, then the local uniform convergence on \([0,1]\) yields
\[
\Phi_{\delta,L}(r_\delta)\to \Phi_L(0)=1.
\]
If \(r=0\) and \(r_\delta<0\) along a subsequence, then, since
\(\Phi_{\delta,L}''\ge0\) and \(\Phi_{\delta,L}'(1)=0\), we have
\(\Phi_{\delta,L}'(s)\le0\) for all \(s\le1\), hence \(\Phi_{\delta,L}\) is decreasing on \((-\infty,1]\).
Therefore
\[
\Phi_{\delta,L}(r_\delta)\ge \Phi_{\delta,L}(0).
\]
Moreover, a direct computation from the definition of \(\Phi_{\delta,L}\) gives
\[
\Phi_{\delta,L}(0)=1-\frac{\delta}{2}\longrightarrow 1=\Phi_L(0).
\]
Thus \eqref{eq:Phi_deltaL_pointwise_liminf} follows.

Now fix \(t\in[0,T]\) such that
\[
\rho_{\delta,L}(t,x)\to \rho_L(t,x)\qquad\text{for a.e. }x\in\Omega,
\]
which holds for a.e.\ \(t\in(0,T)\) by the a.e.\ convergence in \(\Omega_T\).
Applying \eqref{eq:Phi_deltaL_pointwise_liminf} pointwise with
\(r_\delta=\rho_{\delta,L}(t,x)\) and using Fatou's lemma, we obtain
\[
\int_\Omega \Phi_L(\rho_L(t))\dx
\le
\liminf_{\delta\downarrow0}\int_\Omega \Phi_{\delta,L}(\rho_{\delta,L}(t))\dx .
\]
Since \(\rho_0\ge0\) and \(\Phi_{\delta,L}\to\Phi_L\) locally uniformly on \([0,\infty)\), while for fixed \(L\)
the family \((\Phi_{\delta,L})_{\delta\in(0,1)}\) has at most quadratic growth on \([0,\infty)\),
dominated convergence yields
\[
\int_\Omega \Phi_{\delta,L}(\rho_0)\dx \to \int_\Omega \Phi_L(\rho_0)\dx .
\]
Using also weak lower semicontinuity and the strong convergence
\(\nabla\rho_{\delta,L}\to \nabla\rho_L\) in \(L^2(\Omega_T)\), we may pass to the limit in
\eqref{eq:entropy_deltaL} and obtain that for a.e.\ \(t\in(0,T)\),
\begin{equation}\label{eq:entropy_L_diff}
\begin{aligned}
&\int_\Omega \Phi_L(\rho_L(t)) \dx
+\varepsilon\int_0^t \int_\Omega |\Delta\rho_L|^2 \dx\ds
+\kappa\int_0^t \int_\Omega \nabla\rho_L\cdot D(\nu)\nabla\rho_L \dx\ds
\\
&\le
\int_\Omega \Phi_L(\rho_0) \dx
+\frac{c_1}{\varepsilon}\int_0^t \int_\Omega |\nabla\rho_L|^2 \dx\ds .
\end{aligned}
\end{equation}

To close this estimate uniformly in \(L\), we use mass conservation together with the Neumann elliptic estimate.
Let
\[
\bar\rho:=|\Omega|^{-1}\int_\Omega \rho_0 \dx, \qquad \tilde\rho_L:=\rho_L-\bar\rho.
\]
Then
\[
\int_\Omega \tilde\rho_L(t)\dx=0
\qquad\text{for a.e. }t\in(0,T),
\]
and, by the weak formulation of the chemical-potential relation together with
\(\rho_L\in L^2(0,T;H^2(\Omega))\), one has
\[
\partial_n \tilde\rho_L=\partial_n \rho_L=0
\qquad\text{on }\partial\Omega
\]
in the trace sense for a.e. \(t\in(0,T)\).
Therefore the standard \(H^2\)-regularity estimate for the Neumann Laplacian on \(C^{1,1}\) domains yields
\[
\|\tilde\rho_L(t)\|_{H^2(\Omega)}
\le C\|\Delta \tilde\rho_L(t)\|_{L^2(\Omega)}
= C\|\Delta \rho_L(t)\|_{L^2(\Omega)}
\qquad\text{for a.e. }t\in(0,T).
\]
Moreover, since \(\bar\rho\) is constant in space,
\[
\|\bar\rho\|_{H^2(\Omega)}=\|\bar\rho\|_{L^2(\Omega)}
\le C\|\rho_0\|_{L^1(\Omega)}.
\]
Hence
\begin{equation}\label{eq:H2_by_Delta}
\|\rho_L(t)\|_{H^2(\Omega)}
\le C\|\Delta\rho_L(t)\|_{L^2(\Omega)}+C\|\rho_0\|_{L^1(\Omega)}
\qquad\text{for a.e. }t\in(0,T).
\end{equation}

By Ehrling's lemma \cite[Lemma II.5.15]{boyer2012mathematical},
for every \(\tau>0\) there exists \(C_\tau>0\) such that
\[
\|\nabla\rho_L\|_{L^2(\Omega)}
\le \tau\|\rho_L\|_{H^2(\Omega)}+C_\tau\|\rho_L\|_{L^1(\Omega)}.
\]
Using \eqref{eq:H2_by_Delta} together with mass conservation
\(\|\rho_L(t)\|_{L^1(\Omega)}=\|\rho_0\|_{L^1(\Omega)}\) (which follows from \(\rho_L\ge0\)
a.e.\ and the conservation of \(\int_\Omega \rho_L\dx\) established in Step~4),
we obtain
\[
\|\nabla\rho_L\|_{L^2(\Omega)}^2
\le
2\tau^2C^2\|\Delta\rho_L\|_{L^2(\Omega)}^2
+2\bigl(\tau C+C_\tau\bigr)^2\|\rho_0\|_{L^1(\Omega)}^2.
\]
Insert this estimate into \eqref{eq:entropy_L_diff} and choose \(\tau>0\) so small that
\[
\frac{c_1}{\varepsilon}\cdot 2\tau^2C^2\le \frac{\varepsilon}{2}.
\]
Then there exists \(C_0>0\), independent of \(L\), such that
\begin{equation}\label{eq:entropy_L_uniform}
\int_\Omega \Phi_L(\rho_L(t)) \dx
+\frac{\varepsilon}{2}\int_0^t \int_\Omega |\Delta\rho_L|^2 \dx\ds
\le
\int_\Omega \Phi_L(\rho_0) \dx + C_0 t
\qquad\text{for a.e. } t\in(0,T).
\end{equation}
Since \(\Phi_L\) has at most quadratic growth uniformly in \(L\), we have
\[
\sup_{L>1}\int_\Omega \Phi_L(\rho_0)\dx<\infty.
\]
Therefore the right-hand side of \eqref{eq:entropy_L_uniform}
is bounded uniformly in \(L\), and in particular
\begin{equation}\label{eq:Delta_bound_uniform}
\int_0^T  \int_\Omega |\Delta\rho_L|^2 \dx dt \le C,
\qquad \sup_{t\in[0,T]}\int_\Omega \Phi_L(\rho_L(t)) \dx\le C,
\end{equation}
with \(C\) independent of \(L\).

\subsubsection*{Step 6: Uniform (in \(L\)) energy estimate}

From \eqref{eq:energy_L} and \(m_L(\rho_L)\le \rho_L\) (since \(\rho_L\ge 0\)) we have
\begin{equation}\label{eq:energy_L_rhs}
E_L(\rho_L(t))
\le E_L(\rho_0)
+C(\Lambda)\kappa^2\int_0^t  \int_\Omega \rho_L |\nabla\rho_L|^2 \dx\ds.
\end{equation}
Regarding the right-hand side, we estimate
\[
\int_\Omega \rho_L |\nabla\rho_L|^2 \dx \le \|\rho_L\|_{L^\infty(\Omega)}\|\nabla\rho_L\|_{L^2(\Omega)}^2.
\]
Since \(d\le 3\), the embedding \(H^2(\Omega)\hookrightarrow L^\infty(\Omega)\) holds, and by \eqref{eq:H2_by_Delta}
\[
\|\rho_L\|_{L^\infty(\Omega)}\le C\|\rho_L\|_{H^2(\Omega)}
\le C\|\Delta\rho_L\|_{L^2(\Omega)}+C\|\rho_0\|_{L^1(\Omega)}.
\]
Moreover, by the definition of \(E_L\),
\[
\|\nabla\rho_L\|_{L^2(\Omega)}^2\le \frac{2}{\varepsilon}E_L(\rho_L)+C.
\]
Thus there exists \(C>0\), independent of \(L\), such that
\[
\int_\Omega \rho_L |\nabla\rho_L|^2 \dx
\le
C\bigl(1+\|\Delta\rho_L\|_{L^2(\Omega)}\bigr)\bigl(1+E_L(\rho_L)\bigr).
\]
Integrating in time and using Cauchy--Schwarz together with \eqref{eq:Delta_bound_uniform} gives
\[
\int_0^t  \int_\Omega \rho_L |\nabla\rho_L|^2 \dx\ds
\le
C
+ C\int_0^t \|\Delta\rho_L(s)\|_{L^2(\Omega)} E_L(\rho_L(s)) \ds
+ C\int_0^t E_L(\rho_L(s)) \ds.
\]
Since \(\|\Delta\rho_L\|_{L^2(0,T;L^2)}\le C\), the function
\(g_L(s):=\|\Delta\rho_L(s)\|_{L^2(\Omega)}\) belongs to \(L^1(0,T)\) uniformly in \(L\).
Therefore, \eqref{eq:energy_L_rhs} implies an integral Grönwall inequality of the form
\[
E_L(\rho_L(t)) \le C + C\int_0^t \bigl(1+g_L(s)\bigr)E_L(\rho_L(s)) ds,
\]
with \(\int_0^T (1+g_L(s)) ds\le C\) uniformly in \(L\).
By Grönwall's lemma,
\begin{equation}\label{eq:energy_L_uniform}
\sup_{t\in[0,T]} E_L(\rho_L(t))\le C.
\end{equation}
Combining \eqref{eq:energy_L_uniform} with \eqref{eq:Delta_bound_uniform} yields
\begin{equation}\label{eq:uniform_L_bounds}
\|\rho_L\|_{L^\infty(0,T;H^1(\Omega))}
+\|\rho_L\|_{L^2(0,T;H^2(\Omega))}
\le C,
\end{equation}
independent of \(L\).

We next derive an improved bound on the time derivative.
Set
\[
J_L:=J_L^{\mathrm{diff}}+\kappa m_L(\rho_L)D(\nu)\nabla\rho_L.
\]
We first improve the integrability of the diffusive flux.
By \eqref{eq:energy_L},
\[
\int_{\Omega_T}\frac{|J_L^{\mathrm{diff}}|^2}{m_L(\rho_L)} \dx\dt \le C.
\]
Moreover, since
\[
\rho_L\in L^\infty(0,T;L^6(\Omega))
\cap L^2(0,T;L^\infty(\Omega)),
\]
Lebesgue interpolation in time yields
\[
\rho_L\in L^8(\Omega_T),
\qquad
\|\rho_L\|_{L^8(\Omega_T)}
\le C
\|\rho_L\|_{L^\infty(0,T;L^6(\Omega))}^{3/4}
\|\rho_L\|_{L^2(0,T;L^\infty(\Omega))}^{1/4}.
\]
Since \(0\le m_L(\rho_L)\le \rho_L\), we also have
\[
\|m_L(\rho_L)\|_{L^8(\Omega_T)}\le \|\rho_L\|_{L^8(\Omega_T)}\le C.
\]
Now observe that
\[
|J_L^{\mathrm{diff}}|^{16/9}
=
\left(\frac{|J_L^{\mathrm{diff}}|^2}{m_L(\rho_L)}\right)^{8/9}
m_L(\rho_L)^{8/9}.
\]
Hence, by Hölder's inequality with exponents \(9/8\) and \(9\),
\[
\int_{\Omega_T}|J_L^{\mathrm{diff}}|^{16/9}\dx\dt
\le
\left(\int_{\Omega_T}\frac{|J_L^{\mathrm{diff}}|^2}{m_L(\rho_L)}\dx\dt\right)^{8/9}
\left(\int_{\Omega_T}m_L(\rho_L)^8\dx\dt\right)^{1/9}.
\]
Therefore,
\[
\|J_L^{\mathrm{diff}}\|_{L^{16/9}(\Omega_T)}\le C.
\]

We next estimate the traction contribution.
Since \(m_L(\rho_L)\le \rho_L\) and \(D(\nu)\in L^\infty(\Omega_T)\), it is enough to control
\(\rho_L\nabla\rho_L\) in \(L^2(\Omega_T)\).
Consequently,
\[
\|m_L(\rho_L)D(\nu)\nabla\rho_L\|_{L^2(\Omega_T)}
\le
C\|\rho_L\|_{L^\infty(0,T;L^4(\Omega))}
\|\nabla\rho_L\|_{L^2(0,T;L^4(\Omega))}
\le C.
\]
Since \(L^2(\Omega_T)\hookrightarrow L^{16/9}(\Omega_T)\) on the finite-measure cylinder \(\Omega_T\), we conclude that
\[
\|J_L\|_{L^{16/9}(\Omega_T)}\le C.
\]

Now let \(\varphi\in L^{16/7}(0,T;W^{1,16/7}(\Omega))\). By the weak mass balance,
\[
\int_0^T \langle \partial_t\rho_L,\varphi\rangle\dt
=
-\int_0^T\!\!\int_\Omega J_L\cdot \nabla\varphi \dx\dt.
\]
Hence, by Hölder's inequality,
\[
\left|\int_0^T \langle \partial_t\rho_L,\varphi\rangle\dt\right|
\le
\|J_L\|_{L^{16/9}(\Omega_T)}
\|\nabla\varphi\|_{L^{16/7}(\Omega_T)}
\le
C\|\varphi\|_{L^{16/7}(0,T;W^{1,16/7}(\Omega))}.
\]
Therefore,
\begin{equation}\label{eq:dt_bound_improved}
\|\partial_t\rho_L\|_{L^{16/9}(0,T;(W^{1,16/7}(\Omega))')} \le C.
\end{equation}

\subsubsection*{Step 7: Passage \(L\to\infty\)}
By \eqref{eq:uniform_L_bounds} and the improved time-derivative bound \eqref{eq:dt_bound_improved},
Simon's compactness theorem \cite[Corollary~4]{simon1986compact}
yields the existence of \(\rho\) and a subsequence (not relabeled) such that
\[
\begin{aligned}
\rho_L &\rightharpoonup^\ast \rho &&\text{in }L^\infty(0,T;H^1(\Omega)), \\
\rho_L &\rightharpoonup \rho &&\text{in }L^2(0,T;H^2(\Omega)), \\
\rho_L &\to \rho &&\text{strongly in }L^2(0,T;H^1(\Omega)),\\
\rho_L &\to \rho &&\text{strongly in }C([0,T];L^2(\Omega)),\\
\rho_L &\to \rho &&\text{strongly in }L^2(0,T;L^\infty(\Omega))
\cap L^2(0,T;W^{1,4}(\Omega)).
\end{aligned}
\]
In particular, after extraction of a further subsequence,
\[
\rho_L(x,t)\to \rho(x,t)\qquad\text{for a.e. }(x,t)\in\Omega_T,
\]
and \(\rho\ge0\) a.e. Moreover,
\begin{equation}\label{eq:strong_comp_extra}
\rho_L\to \rho \quad\text{strongly in }L^2(0,T;L^\infty(\Omega))
\cap L^2(0,T;W^{1,4}(\Omega)).
\end{equation}
In particular,
\[
\nabla\rho_L\to \nabla\rho
\qquad\text{strongly in }L^2(0,T;L^4(\Omega)).
\]
Since \(m_L(\rho_L)=\min\{\rho_L,L\}\to \rho\) a.e., we next show strong convergence in \(L^2(\Omega_T)\).
By Chebyshev and the uniform \(L^p(\Omega_T)\) bound (from \eqref{eq:energy_L_uniform} together with the
\(L\)-independent coercivity \(W_L(s)\ge c_2|s|^p-c_3\) of Lemma~\ref{lem:WL}(4)),
\[
|\{\rho_L\ge L\}|\le L^{-p}\int_{\Omega_T}\rho_L^p \dx dt \le C L^{-p}\to 0.
\]
Thus
\[
\begin{aligned}
\|m_L(\rho_L)-\rho_L\|_{L^2(\Omega_T)}^2
&\le \int_{\{\rho_L\ge L\}} |\rho_L-L|^2 \dx \dt
\\ &\le \int_{\{\rho_L\ge L\}} \rho_L^2 \dx \dt
\\ &\le \|\rho_L\|_{L^p(\Omega_T)}^2 |\{\rho_L\ge L\}|^{1-\frac{2}{p}}\to 0,
\end{aligned}
\]
and since \(\rho_L\to \rho\) in \(L^2(\Omega_T)\) we conclude
\begin{equation}\label{eq:mL_to_rho}
m_L(\rho_L)\to \rho \qquad \text{strongly in }L^2(\Omega_T).
\end{equation}

We in fact have the stronger convergence
\begin{equation}\label{eq:mL_to_rho_H1}
m_L(\rho_L)\to \rho
\qquad\text{strongly in }L^2(0,T;H^1(\Omega)).
\end{equation}
Indeed, since \(\rho_L\ge0\) a.e., the chain rule yields
\[
\nabla m_L(\rho_L)=\mathbf{1}_{\{0<\rho_L<L\}}\nabla\rho_L
\qquad\text{a.e. in }\Omega_T,
\]
and therefore
\[
\nabla m_L(\rho_L)-\nabla\rho_L
=
-\mathbf{1}_{\{\rho_L\ge L\}}\nabla\rho_L.
\]
Let \(A_L:=\{(x,t)\in\Omega_T:\rho_L(x,t)\ge L\}\). By interpolation between
\(L^\infty(0,T;L^2(\Omega))\) and \(L^2(0,T;L^6(\Omega))\) (which both contain
\(\nabla\rho_L\) by \eqref{eq:uniform_L_bounds} and the Sobolev embedding for \(d\le 3\)),
we have
\[
\nabla\rho_L \in L^{10/3}(\Omega_T;\mathbb{R}^d)\qquad\text{uniformly in }L,
\]
and consequently \(|\nabla\rho_L|^2\) is bounded in \(L^{5/3}(\Omega_T)\) uniformly in \(L\),
hence equi-integrable. Since \(|A_L|\to0\) by the Chebyshev estimate above,
the absolute continuity of the integral yields
\[
\|\nabla m_L(\rho_L)-\nabla\rho_L\|_{L^2(\Omega_T)}^2
=\int_{A_L} |\nabla\rho_L|^2 \dx\dt \to 0.
\]
Since \(\rho_L\to\rho\) strongly in \(L^2(0,T;H^1(\Omega))\), this proves \eqref{eq:mL_to_rho_H1}.

Define \(J_L:=J_L^{\mathrm{diff}}+\kappa m_L(\rho_L)D(\nu)\nabla\rho_L\).
From \eqref{eq:energy_L} we have
\[
\int_{\Omega_T}\frac{|J_L^{\mathrm{diff}}|^2}{m_L(\rho_L)} \dx\dt\le C.
\]
Set
\[
K_L:=
\begin{cases}
\dfrac{J_L^{\mathrm{diff}}}{\sqrt{m_L(\rho_L)}}, & \text{if } m_L(\rho_L)>0,\\[0.8em]
0, & \text{if } m_L(\rho_L)=0.
\end{cases}
\]
Then \((K_L)\) is bounded in \(L^2(\Omega_T;\mathbb{R}^d)\), so, up to a subsequence,
\[
K_L\rightharpoonup K
\qquad\text{weakly in }L^2(\Omega_T;\mathbb{R}^d).
\]
Since \(a,b\ge0\) implies \(|\sqrt a-\sqrt b|^2\le |a-b|\), the strong convergence
\eqref{eq:mL_to_rho} yields
\[
\sqrt{m_L(\rho_L)}\to \sqrt{\rho}
\qquad\text{strongly in }L^2(\Omega_T).
\]
Hence
\[
J_L^{\mathrm{diff}}
=
\sqrt{m_L(\rho_L)}\,K_L
\rightharpoonup
\sqrt{\rho}\,K=:J^{\mathrm{diff}}
\qquad\text{weakly in }L^1(\Omega_T;\mathbb{R}^d).
\]

For the traction part, \eqref{eq:mL_to_rho_H1} and
\(\nabla\rho_L\rightharpoonup \nabla\rho\) in \(L^2(\Omega_T)\) imply
\[
m_L(\rho_L)D(\nu)\nabla\rho_L
\rightharpoonup
\rho D(\nu)\nabla\rho
\qquad\text{weakly in }L^1(\Omega_T;\mathbb{R}^d).
\]
Therefore
\[
J_L \rightharpoonup J:=J^{\mathrm{diff}}+\kappa\rho D(\nu)\nabla\rho
\qquad\text{weakly in }L^1(\Omega_T;\mathbb{R}^d),
\]
which gives \eqref{eq:Jdiff_def}.

To pass to the constitutive identity, let
\(\zeta\in W^{1,\infty}(\Omega;\mathbb{R}^d)\) satisfy \(\zeta\cdot n=0\) on \(\partial\Omega\).
By \eqref{eq:flux_id_L},
\[
\int_0^T\!\!\int_\Omega J_L^{\mathrm{diff}}\cdot \zeta \dx\dt
=
-\int_0^T\!\!\int_\Omega q_L\,\Div\!\bigl(m_L(\rho_L)\zeta\bigr)\dx\dt.
\]
By \eqref{eq:mL_to_rho_H1} we have
\[
m_L(\rho_L)\to \rho
\qquad\text{strongly in }L^2(0,T;H^1(\Omega)),
\]
and therefore
\[
\Div\bigl(m_L(\rho_L)\zeta\bigr)\to \Div(\rho\zeta)
\qquad\text{strongly in }L^2(\Omega_T).
\]
Since \(q_L\rightharpoonup q\) weakly in \(L^2(\Omega_T)\), we may pass to the limit and obtain
\[
\int_0^T\!\!\int_\Omega J^{\mathrm{diff}}\cdot \zeta \dx\dt
=
-\int_0^T\!\!\int_\Omega q\, \Div(\rho\zeta)\dx\dt,
\]
which is exactly \eqref{eq:flux_id}.

Finally, since \(J^{\mathrm{diff}}=\sqrt{\rho}\,K\) gives
\(|J^{\mathrm{diff}}|^2/\rho=|K|^2\) on \(\{\rho>0\}\) (and \(0\) on \(\{\rho=0\}\) by convention),
weak lower semicontinuity yields \eqref{eq:weightedJdiff} due to
\[
\begin{aligned}
\int_{\Omega_T}\frac{|J^{\mathrm{diff}}|^2}{\rho}\dx\dt
=
\int_{\{\rho>0\}}|K|^2\dx\dt
\le
\int_{\Omega_T}|K|^2\dx\dt
&\le
\liminf_{L\to\infty}\int_{\Omega_T}|K_L|^2\dx\dt
\\ &=
\liminf_{L\to\infty}\int_{\Omega_T} \frac{|J_L^{\mathrm{diff}}|^2}{m_L(\rho_L)} \dx\dt.
\end{aligned}
\]

By
\(
q_L=\varepsilon^{-1}W_L'(\rho_L)-\varepsilon\Delta\rho_L
\)
and the uniform \(L^2(\Omega_T)\)-bound on \(\Delta\rho_L\), it remains to control
\(W_L'(\rho_L)\) in \(L^2(\Omega_T)\), uniformly in \(L\).
Using Lemma~\ref{lem:WL}(3) and the embedding
\(H^1(\Omega)\hookrightarrow L^6(\Omega)\) for \(d\le3\), we obtain
\[
\|W_L'(\rho_L)\|_{L^2(\Omega_T)}
\le
C\Bigl(1+\|\rho_L\|_{L^{2(p-1)}(\Omega_T)}^{p-1}\Bigr)\le C.
\]
Hence \((q_L)\) is bounded in \(L^2(\Omega_T)\), and after extraction
\[
q_L\rightharpoonup q
\qquad\text{weakly in }L^2(\Omega_T).
\]

By \eqref{eq:uniform_L_bounds} and \eqref{eq:dt_bound_improved}, Simon's compactness theorem also yields
\[
\rho_L\to \rho
\qquad\text{strongly in }L^2(0,T;L^{2(p-1)}(\Omega)),
\]
because \(H^2(\Omega)\hookrightarrow\hookrightarrow L^{2(p-1)}(\Omega)\) for \(d\le3\).
Therefore, by continuity of the Nemytskii map associated with \(W'\),
\[
W'(\rho_L)\to W'(\rho)
\qquad\text{strongly in }L^2(\Omega_T).
\]
Moreover, if we set
\[
A_L:=\{(x,t)\in\Omega_T:\rho_L(x,t)\ge L\},
\]
then
\[
\|(W_L'-W')(\rho_L)\|_{L^2(\Omega_T)}^2
=
\int_{A_L} |W_L'(\rho_L)-W'(\rho_L)|^2 \dx\dt.
\]
By Lemma~\ref{lem:WL}(3),
\[
|W_L'(s)-W'(s)|^2\le C\bigl(1+|s|^{2p-2}\bigr)
\qquad\forall s\in\mathbb{R},
\]
hence
\[
\|(W_L'-W')(\rho_L)\|_{L^2(\Omega_T)}^2
\le
C\int_{A_L}\bigl(1+\rho_L^{2p-2}\bigr)\dx\dt.
\]
Since \(d\le 3\) and \(p\le 4\), we have \(2p-2\le 6<8\), so the uniform bound
\(\sup_{L>1}\|\rho_L\|_{L^8(\Omega_T)}<\infty\) from Step~6 gives, by Hölder's inequality,
\[
\int_{A_L}\bigl(1+\rho_L^{2p-2}\bigr)\dx\dt
\le
\bigl(1+\|\rho_L\|_{L^8(\Omega_T)}^{2p-2}\bigr)\,|A_L|^{1-\frac{2p-2}{8}}.
\]
Since \(|A_L|\to0\) and the exponent \(1-\tfrac{2p-2}{8}>0\), the right-hand side tends to \(0\).
Therefore
\[
\|(W_L'-W')(\rho_L)\|_{L^2(\Omega_T)}\to 0.
\]
Consequently,
\[
W_L'(\rho_L)\to W'(\rho)
\qquad\text{strongly in }L^2(\Omega_T).
\]

To identify the limit equation for \(q\), let \(\chi\in C_c^\infty(0,T)\) and \(\psi\in H^1(\Omega)\).
For every \(L\),
\[
\int_0^T \chi(t)\int_\Omega q_L(t)\psi \dx\dt
=
\varepsilon^{-1}\int_0^T \chi(t)\int_\Omega W_L'(\rho_L(t))\psi \dx\dt
+\varepsilon\int_0^T \chi(t)\int_\Omega \nabla\rho_L(t)\cdot\nabla\psi \dx\dt.
\]
Using the weak convergence \(q_L\rightharpoonup q\) in \(L^2(\Omega_T)\), the strong convergence
\(W_L'(\rho_L)\to W'(\rho)\) in \(L^2(\Omega_T)\), and
\(\nabla\rho_L\rightharpoonup \nabla\rho\) in \(L^2(\Omega_T)\), we may pass to the limit and obtain
\[
\int_0^T \chi(t)\int_\Omega q(t)\psi \dx\dt
=
\varepsilon^{-1}\int_0^T \chi(t)\int_\Omega W'(\rho(t))\psi \dx\dt
+\varepsilon\int_0^T \chi(t)\int_\Omega \nabla\rho(t)\cdot\nabla\psi \dx\dt.
\]
Since \(\chi\in C_c^\infty(0,T)\) is arbitrary, \eqref{eq:q_weak} follows for a.e. \(t\in(0,T)\).

By construction, \(\Phi_L(s)=\Phi(s)\) for \(0\le s\le L\), and for every fixed \(s\ge0\) one has
\(
\Phi_L(s)=\Phi(s)\) for all \(L\ge s.
\)
In particular,
\[
\Phi_L(s)\to \Phi(s)\qquad\text{for every }s\ge0.
\]
Moreover, \(\Phi_L(s)\ge \Phi(s)\) for all \(s\ge0\), since \(\Phi_L=\Phi\) on \([0,L]\) and
\(\Phi_L''(s)=1/L\ge 1/s=\Phi''(s)\) for \(s\ge L\), with matching value and first derivative at \(s=L\).
Finally, by \eqref{eq:PhiL_quad_bound}, there exists \(C>0\), independent of \(L\), such that
\[
0\le \Phi_L(s)\le C(1+s^2)\qquad\forall s\ge0.
\]
Since \(\Phi_L(\rho_0)\to \Phi(\rho_0)\) a.e.\ in \(\Omega\) and
\(0\le \Phi_L(\rho_0)\le C(1+\rho_0^2)\in L^1(\Omega)\), dominated convergence yields
\begin{equation}\label{eq:PhiL_rho0_conv}
\int_\Omega \Phi_L(\rho_0)\dx \to \int_\Omega \Phi(\rho_0)\dx.
\end{equation}

Fix \(t\in[0,T]\).
Since \(\rho_L(t)\to \rho(t)\) strongly in \(L^2(\Omega)\), every subsequence of \((\rho_L(t))_{L>1}\)
contains a further subsequence (not relabeled) such that
\[
\rho_L(t,x)\to \rho(t,x)\qquad\text{for a.e. }x\in\Omega.
\]
Because \(\rho_L\ge0\) a.e., we claim that
\begin{equation}\label{eq:PhiL_pointwise_liminf}
r_L\to r\ge0,\ L\to\infty
\qquad\Longrightarrow\qquad
\liminf_{L\to\infty}\Phi_L(r_L)\ge \Phi(r).
\end{equation}
Indeed, if \(r>0\), then for \(L\) sufficiently large one has \(r_L\in [r/2,3r/2]\subset (0,L)\), hence
\[
\Phi_L(r_L)=\Phi(r_L)\to \Phi(r).
\]
If \(r=0\), then \(r_L\to0\) and for \(L\) large enough again \(r_L<L\), so that
\[
\Phi_L(r_L)=\Phi(r_L)\to \Phi(0)=1.
\]
This proves \eqref{eq:PhiL_pointwise_liminf}.

Applying \eqref{eq:PhiL_pointwise_liminf} pointwise with \(r_L=\rho_L(t,x)\) and using Fatou's lemma, we obtain
\[
\int_\Omega \Phi(\rho(t))\dx
\le
\liminf_{L\to\infty}\int_\Omega \Phi_L(\rho_L(t))\dx .
\]
Moreover, by weak lower semicontinuity,
\[
\int_0^t\!\!\int_\Omega |\Delta\rho|^2\dx\ds
\le
\liminf_{L\to\infty}\int_0^t\!\!\int_\Omega |\Delta\rho_L|^2\dx\ds.
\]
Furthermore, since \(D(\nu)\) is symmetric and nonnegative definite a.e., we may define its measurable square root
\(D(\nu)^{1/2}\in L^\infty(\Omega_T;\mathbb{R}^{d\times d})\). As
\[
D(\nu)^{1/2}\nabla\rho_L \rightharpoonup D(\nu)^{1/2}\nabla\rho
\qquad\text{weakly in }L^2(\Omega_T;\mathbb{R}^d),
\]
another application of weak lower semicontinuity gives
\[
\int_0^t\!\!\int_\Omega \nabla\rho\cdot D(\nu)\nabla\rho \dx\ds
\le
\liminf_{L\to\infty}\int_0^t\!\!\int_\Omega \nabla\rho_L\cdot D(\nu)\nabla\rho_L \dx\ds.
\]
Passing to the limit in \eqref{eq:entropy_L_diff} and using \eqref{eq:PhiL_rho0_conv} therefore yields the entropy inequality \eqref{eq:entropyineq}.

Since \(\rho_L\to\rho\) a.e.\ and \(W_L=W\) on \([-L,L]\), we have
\[
W_L(\rho_L)\to W(\rho)\qquad\text{a.e. in }\Omega_T.
\]
Moreover, by Lemma~\ref{lem:WL}(4) and \(p>2\), there exists \(C>0\), independent of \(L\), such that
\[
W_L(s)\ge -C(1+s^2)\qquad\forall s\in\mathbb{R}.
\]
Using Fatou's lemma on \(W_L(\rho_L)+C(1+\rho_L^2)\ge0\), together with the pointwise a.e.\ convergence and the strong convergence \(\rho_L(t)\to\rho(t)\) in \(L^2(\Omega)\) for a.e.\ \(t\), we obtain
\[
\int_\Omega W(\rho(t))\dx \le \liminf_{L\to\infty}\int_\Omega W_L(\rho_L(t))\dx .
\]
Together with weak lower semicontinuity of \(\|\nabla\rho(t)\|_{L^2(\Omega)}^2\), this gives
\[
E(\rho(t))\le \liminf_{L\to\infty} E_L(\rho_L(t)).
\]

Next, by the flux convergence argument and weak lower semicontinuity,
\[
\int_0^t\!\!\int_\Omega \frac{|J^{\mathrm{diff}}|^2}{\rho}\dx\ds
\le
\liminf_{L\to\infty}\int_0^t\!\!\int_\Omega \frac{|J_L^{\mathrm{diff}}|^2}{m_L(\rho_L)}\dx\ds.
\]

It remains to pass to the right-hand side involving the traction term.
We use Vitali's convergence theorem.
The almost everywhere convergence \(\rho_L\to\rho\) and \(\nabla\rho_L\to\nabla\rho\)
(the latter after extraction of a further subsequence, using
\eqref{eq:strong_comp_extra}) gives
\[
m_L(\rho_L)|\nabla\rho_L|^2 \to \rho|\nabla\rho|^2
\qquad\text{a.e. in }\Omega_T.
\]
For equi-integrability, note that by Step 6 we have \(\rho_L\in L^8(\Omega_T)\) uniformly in \(L\),
and by the interpolation argument used above, \(|\nabla\rho_L|^2\in L^{5/3}(\Omega_T)\) uniformly in \(L\).
Since \(0\le m_L(\rho_L)\le \rho_L\), Hölder's inequality applied to the product
\(m_L(\rho_L)\cdot|\nabla\rho_L|^2\) with \(\rho_L\in L^8(\Omega_T)\) and
\(|\nabla\rho_L|^2\in L^{5/3}(\Omega_T)\) gives
\[
\bigl\| m_L(\rho_L)|\nabla\rho_L|^2 \bigr\|_{L^{r}(\Omega_T)} \le C,
\qquad
\frac1r=\frac18+\frac35=\frac{29}{40},
\quad\text{i.e. } r=\frac{40}{29}>1,
\]
uniformly in \(L\). In particular, \((m_L(\rho_L)|\nabla\rho_L|^2)_{L>1}\) is
equi-integrable on \(\Omega_T\). By Vitali's theorem,
\[
\int_0^t\!\!\int_\Omega m_L(\rho_L)|\nabla\rho_L|^2\dx\ds
\to
\int_0^t\!\!\int_\Omega \rho|\nabla\rho|^2\dx\ds.
\]

Passing to the limit in \eqref{eq:energy_L} now yields \eqref{eq:energyineq}.

\smallskip
\noindent\textit{Initial condition.}
Since \(\rho_L\to \rho\) strongly in \(C([0,T];L^2(\Omega))\) and \(\rho_L(0)=\rho_0\) for every \(L\),
evaluation at \(t=0\) gives
\[
\|\rho(0)-\rho_0\|_{L^2(\Omega)}
=
\|\rho(0)-\rho_L(0)\|_{L^2(\Omega)}
\le
\sup_{t\in[0,T]}\|\rho(t)-\rho_L(t)\|_{L^2(\Omega)}\to 0.
\]
Hence \(\rho(0)=\rho_0\) in \(L^2(\Omega)\); in particular \(\rho\in C([0,T];L^2(\Omega))\).

\smallskip
\noindent\textit{Mass conservation.}
By Step~4 we have \(\int_\Omega\rho_L(t)\dx=\int_\Omega\rho_0\dx\) for every \(t\in[0,T]\),
and \(\rho_L(t)\to\rho(t)\) in \(L^2(\Omega)\) for every \(t\). Passing to the limit yields
\[
\int_\Omega \rho(t)\dx=\int_\Omega \rho_0\dx
\qquad\text{for all }t\in[0,T],
\]
which is the asserted mass conservation.

\smallskip
\noindent\textit{Weak mass balance.}
It remains to recast the mass balance in the form \eqref{eq:weak_mass}. Passing to the limit
\(L\to\infty\) in \eqref{eq:weak_mass_L}, using \(J_L\rightharpoonup J\) in \(L^{16/9}(\Omega_T)\) and
\(\partial_t\rho_L\rightharpoonup\partial_t\rho\) in \(L^{16/9}(0,T;(W^{1,16/7}(\Omega))')\), we obtain, for all
\(\varphi\in L^{16/7}(0,T;W^{1,16/7}(\Omega))\),
\[
\int_0^T \langle \partial_t\rho,\varphi\rangle\dt
+\int_0^T\!\!\int_\Omega J\cdot\nabla\varphi\dx\dt=0.
\]
Let \(\psi\in C_c^\infty([0,T)\times\overline\Omega)\). Then \(\psi\in L^{16/7}(0,T;W^{1,16/7}(\Omega))\),
and since \(\rho\in C([0,T];L^2(\Omega))\) with \(\rho(0)=\rho_0\), integration by parts in time yields
\[
\int_0^T \langle \partial_t\rho,\psi\rangle\dt
=
-\int_0^T\!\!\int_\Omega \rho\,\partial_t\psi\dx\dt
-\int_\Omega \rho_0\,\psi(\cdot,0)\dx.
\]
Combining the last two identities gives \eqref{eq:weak_mass}.
This completes the proof of Theorem~\ref{thm:main}.
\qed

\section{Conclusion and outlook}\label{sec:conclusion}

We have proved the global-in-time existence of nonnegative weak solutions for a one-sided degenerate Cahn--Hilliard equation with prescribed anisotropic traction. The analysis combines a weighted energy estimate with a mobility-matched entropy method. The entropy structure is crucial for recovering higher-order compactness and for preserving nonnegativity in the degenerate limit, while the weighted energy estimate identifies the diffusive flux in the form required by the weak formulation.

Several questions remain open. First, uniqueness is not addressed here and appears to be delicate for this class of degenerate fourth-order equations, especially because the anisotropic traction term is not generated by the variational derivative of the Cahn--Hilliard energy. Second, the long-time behavior and coarsening properties may depend sensitively on the geometry and regularity of the prescribed tensor \(D(\nu)\). Understanding whether anisotropic traction can select persistent length scales or alter classical Cahn--Hilliard coarsening rates, such as the Lifshitz--Slyozov \(t^{1/3}\) rate, would be an interesting direction for future work. Finally, the present analysis treats the orientation field as prescribed. Natural extensions include density-dependent traction tensors, time-dependent orientation fields coupled to additional transport or Liouville-type equations, and the full two-population model before reduction to the segregated one-field regime.

{\small
\bibliographystyle{ieeetr}
\bibliography{literature}}

\end{document}